\newtheorem{theorem}{Theorem}[section]
\newtheorem{lemma}[theorem]{Lemma}
\newtheorem{proposition}[theorem]{Proposition}
\newtheorem{conjecture}[theorem]{Conjecture}
\newtheorem{cor}[theorem]{Corollary}
\newtheorem{definition}[theorem]{Definition}
\newtheorem{problem}[theorem]{Problem}
\newtheorem{question}[theorem]{Question}
\newtheorem{remark}[theorem]{Remark}
\newtheorem{hypothesis}[theorem]{Hypothesis}
\newcommand{\be}{\begin{equation}}
\newcommand{\ee}{\end{equation}}
\def\XXint#1#2#3{{\setbox0=\hbox{$#1{#2#3}{\int}$ }
\vcenter{\hbox{$#2#3$ }}\kern-.6\wd0}}
\title{Dynamics of Ideal Fluid Flows}
\author{Tarek M. Elgindi}
\date{\today}
\begin{document}

\maketitle

\begin{abstract}
We will discuss various aspects of the incompressible Euler equation. We will discuss, in particular, problems related to the least action principle, the existence of special solutions, the problem of solvability, singularity formation, and asymptotic behavior. \end{abstract}
\tableofcontents
\section{Introduction}
Ideal fluids are perhaps the most basic of physical media, their defining property being that they occupy space. As the fluid moves around, no region can increase or decrease its total measure. This is called the measure preserving or incompressibility condition. The incompressibility condition can be seen as a constraint on what type of motion we allow for in a fluid. Since we believe that all physical motion occurs in such a way as to minimize some energy, even if we do not know \emph{a-priori} how to quantify it, we can derive the equations of motion of an ideal fluid from the \emph{principle of least action} while enforcing the incompressibility constraint. The following gives a simple illustration of the type of question that one could ask in this direction. 

\begin{problem}\label{OptimalProblem}
Imagine that all of $\mathbb{R}^d$ is filled with an incompressible fluid. Assume that we isolate in our minds some bounded region $\Omega\subset\mathbb{R}^d$ and we wish to transport it to its translate $\Omega+e_1$ by time $T$, while {\bf minimizing} the total kinetic energy. How do we complete the task?
\end{problem}

We will come to a more detailed discussion of this problem later in Section \ref{Optimal}, but the immediate reaction of someone who hears this question is to say: you should translate $\Omega$ rigidly, of course! This is indeed what optimal transport would say in such a situation.  We should realize, however, that such a motion in a part of the fluid immediately affects the rest of the fluid (because the fluid to the right has to move out of the way!). In other words, we cannot simply prescribe the motion of the fluid inside $\Omega$; we must prescribe the motion in all of $\mathbb{R}^d.$ This is the central problem with ideal fluids: the constraint of volume preservation implies that when one region of fluid moves, everything else moves in response.  This effect we are describing is called \emph{non-locality} and it plays a central role in the dynamics of ideal fluids. As trivial as Problem \ref{OptimalProblem} sounds, the author is not aware of any work in which this problem is studied or even discussed. 

Once we actually carry out the method of least action, taking into account measure preservation, we get a system of partial differential equations describing the motion. This system was first derived by Euler in 1757, and it serves as a quite reasonable model for the motion of a variety of substances, including air and water, while also describing a fundamental mathematical object. Once we have the equations of motion, there are a number of questions we could ask. A first interesting problem is to study the existence of particular solutions to the equation of motion. It turns out that the equations of motion are consistent with many of the types of fluid motion we are familiar with from our daily experience: laminar flow, vortices, and vortex rings. The emergence of such \emph{special solutions} is one of the most remarkable aspects of fluids: what are they, and why do they always seem to appear? Examples of such special solutions are steady solutions, translating solutions, and time-periodic solutions. The relevant problem can be stated somewhat informally as follows:
\begin{problem}
Give a complete description of the set of special solutions to the Euler equation and explain why they emerge. 
\end{problem}
\noindent The two parts of this problem, while deeply related, are of a fundamentally different nature. The first one appears to be essentially static and aspects of it are amenable to the techniques of nonlinear functional analysis. The second question is dynamic and requires the discovery of perturbative and non-perturbative mechanisms that lead such special solutions to attract others. A huge swath of the literature, both pure and applied, on ideal fluids has been devoted to this problem. One of the concrete directions under this problem is the question of long-time relaxation for 2d solutions, which will be discussed in some detail below.  


Another interesting problem relates to solvability
. 
If we view fluid motion as similar to the geodesic motion of a free particle on a surface (the surface here being the set of volume preserving diffeomorphisms), there are two fundamental questions we can ask: do geodesic paths continue forever, and can all points on (connected components of) the surface be connected by a geodesic path? The first one can be seen as a question of global solvability for an initial value problem, while the second can be seen as a question of global solvability for a boundary value problem.  This can be stated formally as follows. 
\begin{problem}
Do solutions to the Euler equation exist for all time? Can any two attainable fluid configurations be connected by an Euler solution?
\end{problem}
\noindent In dimension $d=2$, we have global existence for classical solutions. When $d\geq 3,$ global existence is known to break down in the setting of classical solutions and is likely to break down no matter what regularity is assumed. This is due to the formation of \emph{finite-time singularities}. As for the solvability of the boundary value problem, i.e. the geodesic completeness of the space SDiff, it is known that \emph{optimal} paths exist locally for $d\geq 2$  \cite{EbinMarsden} but need not exist globally when $d\geq 3$ \cite{Shnirelman}. It is also known that arbitrary configurations can be connected by so-called generalized solutions \cite{Brenier}. However, whether attainable configurations can be connected by non-optimal paths when $d\geq 2$ is unknown and unclear. One thing that is clear is that the two questions above represent two completely different approaches to the modeling of fluids. Global solvability for the Cauchy problem, which is a question about the global predictive power of the equation, need have nothing to do with the global solvability for the two point problem, which we could say is a question about the ability of the Euler equation to model observed fluid motion.  

In the coming sections, we will begin with a brief discussion of the least action principle and then move on to discuss steady solutions, mixing, and the conjectured behavior of 2d Euler solutions in infinite time. We will then move on to discuss the solvability problem and the formation of singularities in finite time. Along the way, we will propose problems of various difficulty, some of which could bring our loftier goals a little bit closer. 



\section{Optimal Motion}\label{Optimal}
As a way to give a quick derivation of the Euler equation and to motivate interest in a lamentably understudied direction, let us now discuss Problem \ref{OptimalProblem} in some more detail.  By rescaling, we may set $T=1$. We may also generalize the problem to transporting some distribution $h_0$ to some volume preserving rearrangment of it, $h_1$ \footnote{Note that we are going to insist here that the measure we use is the Lebesgue measure, but this can of course be relaxed.}.  Now we wish to minimize:
\[J(u)=\int_0^1\int_{\mathbb{R}^d} |u(x,t)|^2dxdt.\] Over all $u$ that transport $h_0$ to $h_1,$ keeping volume preservation throughout $\mathbb{R}^d$. This leads to the following constraints. 
\begin{equation}\label{FuEqn}\partial_t f_u+u\cdot\nabla f_u=0,\end{equation}
\[f_u|_{t=0}=h_0,\qquad f_u|_{t=1}=h_1,\]
\[\text{div}(u)=0,\] the final constraint being the constraint of volume preservation\footnote{This is because $\text{div}(u)=0$ is equivalent to $\int_{\partial B} u\cdot n=0$ for any ball $B,$ which is equivalent to volume preservation.}. An optimizer would have to satisfy:
\begin{equation}\label{FirstVariation}\int_0^1\int u\cdot v=0,\end{equation} for all admissible variations $v.$ To find those admissible $v,$ we vary the constraints. The divergence-free constraint is linear. Therefore, varying the constraints in the direction of some divergence-free $v,$ and setting $g:=\partial_v f_u,$ gives that the solution to 
\[\partial_t g+u\cdot\nabla g+v\cdot\nabla f_u=0,\] must satisfy
\[g|_{t=0}=g|_{t=1}=0.\] Denote by $\Phi_u$ the solution to the ODE:
\[\frac{d}{dt}\Phi_u(x,t)=u(\Phi_u(x,t),t),\] with $\Phi_u(x,0)=x$. Since $f_u$ satisfies \eqref{FuEqn}, we find that \[\frac{d}{dt}(g\circ\Phi_u)=-\Phi_u^*v\cdot\nabla h_0,\] where we define $\Phi_u^*$ by
\[\Phi_u^*h=(\nabla\Phi_u)^{-1}h\circ\Phi_u.\]
In particular, to ensure that $g$ is zero initially and terminally, the constraint is that
\begin{equation}\label{ConstraintEquation}\int_0^1\Phi_u^*v\cdot \nabla h_0=0.\end{equation} 
As a particular case, we may consider all divergence free $v$ for which  
\[\int_0^1\Phi_u^*v=0,\] which is the set of all divergence free $v$ for which 
\[v= \Big((\nabla\Phi_u)\frac{d}{dt}\eta\Big)\circ\Phi_u^{-1},\] for some $\eta|_{t=0}=\eta|_{t=1}=0$ with $\eta$ divergence-free. 
We thus get that the optimizer $u$ must satisfy:
\[\int_0^1\int \frac{d}{dt}((\nabla\Phi_u)^{t}(u\circ\Phi_u)) \cdot \eta=0,\] for all divergence-free $\eta,$ which is equivalent to the statement:
\begin{equation}\label{Weberish}\mathbb{P}((\nabla\Phi_u)^{t}(u\circ\Phi_u))=u_0,\end{equation} for all $t\in [0,T].$ Here $\mathbb{P}$ is the Leray projector. This is equivalent to the so-called Weber formula for solutions to the Euler equation \cite{Peter}, which is equivalent to the velocity formulation:
\begin{equation}\label{VelocityFormulation}
\partial_t u+ u\cdot\nabla u+\nabla p=0,
\end{equation}
\begin{equation}\label{divfree}
\text{div}(u)=0.
\end{equation}
 Taking the curl of \eqref{Weberish}, in the three dimensional case, a direct calculation gives that
\[\Phi_u^*\text{curl}(u)=\text{curl}(u_0).\] This is simply the statement that $\text{curl}(u)$ is Lie transported by $u.$ In particular, if we set for an optimizer $u$ of our Problem,
\[\omega=\text{curl}(u),\] we find that $\omega$ satisfies:
\begin{equation}\label{VorticityEquation}\partial_t\omega+[u,\omega]=0,\end{equation}
\begin{equation}\label{BSLaw}u=\text{curl}^{-1}(\omega),\end{equation} where $\text{curl}^{-1}$ is the uniquely defined operator on divergence-free vector fields and \[[u,\omega]=\partial_u \omega-\partial_\omega u.\]  This is the well-known vorticity formulation of the Euler equation.
Restating what we have found so far:
\begin{cor}
Any optimizer to Problem \eqref{OptimalProblem} satisfies the Euler equation. 
\end{cor}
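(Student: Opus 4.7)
The plan is to organize into a clean argument the Lagrange multiplier computation that has been sketched above. An optimizer $u$ minimizes the kinetic energy $J$ over the admissible class of divergence-free vector fields for which the associated passive scalar $f_u$ solving \eqref{FuEqn} connects $h_0$ at $t=0$ to $h_1$ at $t=1$. The first-order optimality condition is \eqref{FirstVariation}, namely $\int_0^1\int u\cdot v=0$ for every admissible variation $v$, and the task is to show that this orthogonality condition forces $u$ to satisfy \eqref{VelocityFormulation}--\eqref{divfree}.

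First I would characterize admissible variations. A variation $v$ must be divergence-free and must preserve the endpoint conditions $f_u|_{t=0}=h_0$, $f_u|_{t=1}=h_1$. Linearizing the transport equation along $v$ and conjugating by the flow $\Phi_u$, the vanishing of $g=\p_v f_u$ at $t=0,1$ becomes the single constraint \eqref{ConstraintEquation}. Next I would select a convenient dense subclass: divergence-free $v$ satisfying the stronger pointwise identity $\int_0^1 \Phi_u^* v\, dt=0$, which automatically implies \eqref{ConstraintEquation}. Any such $v$ is of the form $v=((\nabla\Phi_u)\tfrac{d}{dt}\eta)\circ \Phi_u^{-1}$ for a divergence-free $\eta$ vanishing at $t=0,1$. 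Substituting this parametrization into \eqref{FirstVariation}, changing variables $x\mapsto \Phi_u(x)$ (which has unit Jacobian since $u$ is divergence-free), and integrating by parts in time then yields
\[\int_0^1\int \frac{d}{dt}\bigl((\nabla\Phi_u)^{t}(u\circ\Phi_u)\bigr)\cdot\eta\,dx\,dt=0\]
for every admissible $\eta$. Since $\eta$ ranges over a dense class of divergence-free test fields vanishing at the endpoints, the Leray projection of the integrand must be independent of $t$, and evaluating at $t=0$ pins down its value as $u_0$. This is precisely the Weber identity \eqref{Weberish}, which as noted in the text is equivalent to the Euler system \eqref{VelocityFormulation}--\eqref{divfree}.

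The main obstacle is twofold: justifying that the restricted subclass of variations is large enough to extract the full Euler equation rather than some weaker projection of it, and ensuring that the change of variables and the integration by parts are permissible at the regularity available. A priori an optimizer is only known to lie in $L^2$ in space-time, so $\Phi_u$ need not be a classical flow and $(\nabla\Phi_u)^t$ need not exist in the ordinary sense. A clean resolution is to first assume the optimizer is smooth and divergence-free, carry out the derivation above directly, and then appeal either to a separate regularity theory or to a weak reformulation of the Weber identity to cover the general case. Within the smooth category, the key density check to verify is that every divergence-free $\eta$ vanishing at $t=0,1$ produces an admissible $v$, which reduces to checking that the construction $v=((\nabla\Phi_u)\dot\eta)\circ \Phi_u^{-1}$ is again divergence-free; this is immediate from the fact that the pushforward of a divergence-free field under a volume-preserving diffeomorphism is divergence-free.
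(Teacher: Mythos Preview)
Your proposal is correct and follows essentially the same route as the paper: the corollary is simply a restatement of the preceding computation, and you have reorganized that same computation---identify admissible variations via \eqref{ConstraintEquation}, restrict to the subclass $\int_0^1\Phi_u^*v=0$ parametrized by $\eta$, substitute into \eqref{FirstVariation}, integrate by parts to obtain the Weber formula \eqref{Weberish}, and invoke its equivalence with \eqref{VelocityFormulation}--\eqref{divfree}. Your added discussion of regularity obstacles and the divergence-free check for the pushforward are sensible caveats, but the paper treats the derivation formally and does not address them; otherwise the arguments coincide.
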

Notice, however, that Problem \ref{OptimalProblem} actually satisfies many more constraints. In particular, we did not use the full freedom on $v$ given by the constraint equation \eqref{ConstraintEquation}. This will be discussed in much greater detail elsewhere, but let us record here some of what can be said in the case $d=2.$ 
\begin{theorem}
When $d=2$ and $\Omega$ is a smooth domain, an optimal solution to Problem \eqref{OptimalProblem} must solve the Euler equation and satisfy that $\omega_0$ is a measure on $\partial\Omega.$ In particular, an optimal solution must be a vortex sheet. When $\Omega=B_1(0),$ the solution is given by an explicit translating vortex sheet solution.
\end{theorem}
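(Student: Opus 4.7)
The plan is to build on the corollary already established: any optimizer of Problem \ref{OptimalProblem} satisfies the Euler equation. What remains is the vortex-sheet structure of $\omega_0$ and the explicit solution when $\Omega=B_1(0)$. The key observation is that the derivation of the Euler equation above used only the \emph{strict subclass} of variations satisfying $\int_0^1\Phi_u^* v\,dt\equiv 0$ pointwise, whereas the genuine admissibility condition \eqref{ConstraintEquation} is much weaker when $h_0=\mathbf 1_\Omega$: since $\nabla\mathbf 1_\Omega$ is (up to sign) the normal surface measure on $\partial\Omega$, \eqref{ConstraintEquation} reduces to
\begin{equation}
\int_0^1\Phi_u^*v(y,t)\cdot n(y)\,dt=0,\qquad y\in\partial\Omega,
\end{equation}
with no constraint on $\Phi_u^*v$ off $\partial\Omega$. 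In particular, every time-independent divergence-free field $V$ on $\R^d$ tangent to $\partial\Omega$ arises as an admissible $\tilde v=\Phi_u^* v$.

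I would then insert $\tilde v(y,t)=V(y)$ into the first-variation identity. Writing $W(y,t)=(\nabla\Phi_u)^t(u\circ\Phi_u)$, the Weber formula \eqref{Weberish} gives $W(y,t)=u_0(y)+\nabla_y p(y,t)$. Time-averaging, and using that $\nabla_y\bar p\perp V$ in $L^2$ for divergence-free $V$ with decay at infinity,
\begin{equation}
0=\int_0^1\!\!\int W\cdot V\,dy\,dt=\int u_0\cdot V\,dy.
\end{equation}
Thus $u_0$ is $L^2$-orthogonal to every divergence-free field tangent to $\partial\Omega$. By the Hodge decomposition on $\R^d$ split by $\partial\Omega$, the orthogonal complement of such fields, within the divergence-free $L^2$ fields, consists exactly of vector fields that are gradients of harmonic potentials on $\Omega$ and on $\R^d\setminus\bar\Omega$ separately, with continuous normal trace across $\partial\Omega$. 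Hence $u_0|_\Omega=\nabla\phi_1$ and $u_0|_{\R^d\setminus\bar\Omega}=\nabla\phi_2$ with $\Delta\phi_i=0$ and $\partial_n\phi_1=\partial_n\phi_2$ on $\partial\Omega$. In $d=2$, $\mathrm{curl}(u_0)$ then vanishes on either side and picks up only the tangential jump across $\partial\Omega$, so $\omega_0=[u_0\cdot\tau]\,d\sigma_{\partial\Omega}$ is a measure on $\partial\Omega$: the optimal flow is a vortex sheet.

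For $\Omega=B_1(0)$, I would exhibit and verify the symmetric translating solution. Seeking $u(x,t)=u_0(x-te_1)$, take $u_0\equiv e_1$ inside (a harmonic gradient matching rigid translation) and outside take the unique potential flow whose normal trace matches $e_1\cdot n=\cos\theta$ on $\partial B_1(0)$, decays at infinity, and has zero circulation by reflection symmetry: in polar coordinates, $u_0=r^{-2}(\cos\theta\,\hat r+\sin\theta\,\hat\theta)$ for $r>1$. This field is curl-free on each side, satisfies $\partial_n\phi_1=\partial_n\phi_2$ on $r=1$, and its tangential jump at $r=1$ is a vortex sheet that propagates rigidly, transporting $B_1(0)$ to $B_1(e_1)$ in time $1$. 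The hardest part will be rigorously justifying the use of admissible variations with $\tilde v$ stationary and tangent to $\partial\Omega$ when $h_0$ is only BV, which requires a careful weak interpretation of \eqref{FuEqn} with such initial data and of $\Phi_u^*$ acting on these variations; a secondary technical issue is pinning down the Hodge complement on $\R^d$ with an interface, especially controlling decay and regularity of the potentials $\phi_i$.
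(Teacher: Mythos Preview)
Your argument for the vortex-sheet structure is essentially the one the paper has in mind: exploit the full set of admissible variations \eqref{ConstraintEquation} rather than only the subclass with $\int_0^1\Phi_u^*v\equiv 0$. The paper only says ``the first part is not difficult to prove'' and defers details to \cite{VariationalPrinciples}, but your choice of time-independent $\tilde v=V$ tangent to $\partial\Omega$, the change of variables to $\int W\cdot V$, the Weber identity $W=u_0+\nabla q$, and the Hodge splitting across $\partial\Omega$ is exactly the right way to fill this in. The technical caveats you flag (weak interpretation when $h_0=\mathbf 1_\Omega$, decay of the exterior potential) are real but routine.

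The genuine gap is in the second part. Your candidate --- $u_0=e_1$ inside and the exterior dipole $u_0=r^{-2}(\cos\theta\,\hat r+\sin\theta\,\hat\theta)$ --- does transport $B_1$ to $B_1+e_1$ and is irrotational on each side, but it is \emph{not} a weak Euler solution, so by the first part of the theorem it cannot be the optimizer. In the frame moving with $e_1$ the interior is at rest while the exterior tangential speed at $r=1$ is $2\sin\theta$; Bernoulli then gives interior pressure constant but exterior pressure $C-2\sin^2\theta$, so the free-boundary (pressure continuity) condition fails. Equivalently, the Birkhoff--Rott velocity of the sheet --- the average of the one-sided limits --- is $\cos\theta\,\hat r$, not $e_1$: the circular shape does translate, but Lagrangian markers slide tangentially with $\dot\theta=\sin\theta$ in the moving frame, and since $\partial_\theta(\sin\theta\cdot\gamma)=\partial_\theta(2\sin^2\theta)\not\equiv 0$ the strength $\gamma=2\sin\theta$ is not preserved. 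You have verified only the kinematic matching of normal traces; the claim that the sheet ``propagates rigidly'' needs the dynamic condition, which fails here. Whatever explicit translating sheet the paper intends, it is not this naive potential pair, and pinning it down requires more than matching Neumann data.
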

\noindent The first part is not difficult to prove. One interesting aspect of the above computations is that they generalize naturally to many different variational problems, all of whose optimizers are solutions to the Euler equation \cite{VariationalPrinciples}.  

\begin{remark}
Let us remark that the usual least action principle for the Euler equation \cite{ArnoldKhesin} given as a two-point boundary value problem readily implies the corollary above. The calculations above are just included for pedagogical purposes and to uncover the second constraint for that particular problem (that the solution is a vortex sheet). Furthermore, while Problem \ref{OptimalProblem} doesn't seem to have been studied before, related variational principles have been considered in \cite{MW,CM,HK}.
\end{remark}
While the classical two-point boundary problem for Euler flows may be difficult to solve in general \cite{Shnirelman, Brenier, MP}, it is possible that more relaxed problems such as Problem \ref{OptimalProblem}  could enjoy a good existence theory. 


\section{Steady Solutions} 
As in any dynamical system, the first step to understanding the dynamics is to study the steady solutions and those nearby. Let us start with a look at the three dimensional case. 
Steady solutions to \eqref{VelocityFormulation}-\eqref{divfree} satisfy:
\[u\cdot\nabla u+\nabla p=0,\] which can be re-written as:
\[u\times \omega=\nabla (p+\frac{|u|^2}{2}).\] Calling $H=p+\frac{1}{2}|u|^2,$ we find that:
\[u\times \omega=\nabla H,\] and 
\[u\cdot\nabla H=\omega\cdot\nabla H=0.\] When $H$ is analytic and non-constant, we find that the fluid trajectories are confined to move along the level surfaces of $H,$ which are generically two dimensional tori or cylinders. These are called fibered steady states. In the other case that $H\equiv const$, we must have that 
\[u\times\omega=0.\] These are called Beltrami flows.  Arnold's Structure Theorem \cite{ArnoldKhesin} is simply the statement that, in the analytic class, all steady solutions are either (locally) fibered or are Beltrami flows. From one point of view, the Structure Theorem is quite satisfactory since it seems to say that only two seemingly restrictive things can hold for steady Euler solutions. Unfortunately, not much is known beyond that. In particular, not many fibered solutions are known to exist outside of highly restrictive symmetry classes \cite{Lortz,BrunoLaurence,EncisoJEMS,DEG}. This is the subject of the so-called Grad Conjecture \cite{Grad}. On the side of Beltrami flows, it is known now that they can exhibit a wide array of dynamical phenomena including chaos \cite{EtnyreGhrist,EncisoPeralta,EncisoPeralta2}.   
\subsection{The Two Dimensional Case}
Much more can be said about two-dimensional steady flows. The two-dimensional case can be seen as a special case of the three dimensional case by considering:
\[u(x,y,z)=\begin{pmatrix}
u_1(x,y)\\
u_2(x,y)\\
0
\end{pmatrix},\qquad \omega(x,y,z)=\begin{pmatrix} 
0\\
0\\
\partial_1 u_2(x,y)-\partial_2 u_1(x,y)
\end{pmatrix}.\] 
In the following, we will identify $u$ with just the first two components of $u$ and $\omega$ with just its last component. In particular, $\omega$ will now be viewed as a scalar. 
We then see that the steady vorticity equation is just:
\[\partial_u\omega=0.\]
Since $u$ is divergence-free, we see that we can write:
\[u=\begin{pmatrix}
    -\partial_2\psi\\
    \partial_1\psi,
\end{pmatrix}\]
for some $\psi.$ We thus get that 
\[\partial_1\psi\partial_2\omega-\partial_1\psi\partial_2\omega=0,\]
\[\Delta\psi=\omega.\] This can be written more compactly as:
\begin{equation}\label{SEE}\{\psi,\Delta\psi\}=0,\end{equation} along with the condition that $\psi$ be constant on (connected components of) the boundary of the domain, if it is non-empty. $\psi$ solving \eqref{SEE} is simply the statement that if $\psi$ is constant on some connected set, $\Delta\psi$ must also be constant there. It is not difficult to see that any $\psi$ that is invariant under the Euclidean symmetries, translation, or rotation are automatically steady solutions. In particular, there are two classes of "trivial" steady states: \emph{shear flows} and \emph{radial flows}. There is another class of steady states, which are those $\psi$ satisfying:
\[\Delta\psi=F(\psi),\]
for some suitably differentiable $F.$
It turns out that it is possible to prove what can be seen as a 2d version of Arnold's Structure Theorem in this setting \cite{EHSX}: 
\begin{theorem}\label{Classification}
Let $\Omega$ be an analytic simply connected domain. Assume that $\psi$ is an analytic solution to \eqref{SEE} on $\bar\Omega$ with $\psi$ constant on $\partial\Omega.$ Then, either $\psi$  is radial or there exists $F$ smooth on the interior of the range of $\psi$ for which \begin{equation}\label{SEE2}\Delta\psi=F(\psi).\end{equation}
\end{theorem}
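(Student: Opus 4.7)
The plan is to build $F$ locally from the Poisson bracket identity $\{\psi,\Delta\psi\}=0$ and then patch the local pieces together using the topology of $\Omega$ and the identity theorem for real-analytic functions. Assume throughout that $\psi$ is nonconstant, since otherwise any $F$ works. On the open regular set $\Omega_{\mathrm{reg}}=\{x\in\Omega : \nabla\psi(x)\neq 0\}$, the hypothesis forces $\nabla\psi$ and $\nabla\Delta\psi$ to be parallel, so $\Delta\psi$ is constant along each connected arc of each level curve of $\psi$. By the implicit function theorem there is, around every regular point, a real-analytic $F_{\mathrm{loc}}$ defined on a neighborhood of the corresponding $\psi$-value with $\Delta\psi=F_{\mathrm{loc}}(\psi)$. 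The task is to show that all these local objects coalesce into a single smooth function on the interior of the range of $\psi$.

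Next I analyze the critical set $C=\{\nabla\psi=0\}$ using analyticity. Since $\psi$ is real-analytic on $\bar\Omega$, $C$ is a real-analytic subvariety with finitely many irreducible components, each either a point or a one-dimensional analytic arc. Along any one-dimensional component the tangential derivative of $\psi$ vanishes, so $\psi$ is constant there; hence the set of critical values is finite, say $\{c_1,\ldots,c_N\}$. First suppose every irreducible component of $C$ is zero-dimensional. For each regular value $c$ the level set $\{\psi=c\}$ is a smooth compact one-manifold in $\Omega$, a disjoint union of Jordan curves. Because $\Omega$ is simply connected and planar, the quotient of $\bar\Omega$ obtained by collapsing each connected component of each level set to a point --- the Reeb quotient of $\psi$ --- is a finite tree, and $\psi$ descends monotonically along each of its edges. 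Since $\Delta\psi$ is constant on the components of each level set, it descends too, and yields a real-analytic function of $\psi$ along every edge.

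The crucial step is to match these edge-wise functions at the vertices of the Reeb tree. At each isolated critical point $p^{*}$ with $\psi(p^{*})=c^{*}$, I would expand $\psi$ and $\Delta\psi$ as real-analytic Taylor series and insert them into $\{\psi,\Delta\psi\}=0$. Order-by-order the constraint forces $\Delta\psi$ to depend analytically on $\psi$ throughout a \emph{full} neighborhood of $p^{*}$, not merely on individual level arcs; at a Morse saddle $\psi-c^{*}\sim uv/2$ this is the elementary check that any Poisson-commuting analytic function of $(u,v)$ is in fact a real-analytic function of the product $uv$, and the same phenomenon persists at a general analytic isolated critical point. Consequently the edge-wise local functions meeting at $c^{*}$ agree on one-sided neighborhoods of $c^{*}$, and by the identity theorem they agree on their entire common domain. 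Working outward through the tree from the boundary level, the pieces paste together into a single real-analytic $F$ on $(\min\psi,\max\psi)\setminus\{c_1,\ldots,c_N\}$ that extends smoothly across each $c_i$ by the same local analysis, giving the desired conclusion in the isolated-critical-point regime.

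The main obstacle is the remaining case, in which $C$ contains an irreducible one-dimensional analytic component $\Gamma$. Along such a $\Gamma$ the level-set foliation degenerates, the Reeb picture breaks down, and the previous Taylor-expansion and tree-patching argument no longer applies. Here I would use the Poisson identity together with $\nabla\psi|_\Gamma=0$ to show first that the normal Hessian eigenvalue of $\psi$ is constant along $\Gamma$ (a Morse--Bott-type rigidity), and then that the combination of analyticity, the boundary constraint $\psi|_{\partial\Omega}=\mathrm{const}$, and the simple connectivity of $\Omega$ promotes this local symmetric structure to a global one, forcing $\Gamma$ to be a round circle and $\psi$ to be invariant under the rotations about its center. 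The heart of the theorem is precisely this rigidity: a one-dimensional component of the critical set is compatible with our hypotheses only when $\psi$ is radial.
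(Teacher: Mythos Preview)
Your Reeb-tree patching argument for the case of isolated critical points is reasonable at Morse points (the $uv$ computation is correct), though the assertion that ``the same phenomenon persists at a general analytic isolated critical point'' is unsubstantiated and not obvious: whether every analytic $g$ Poisson-commuting with $\psi$ near a degenerate isolated critical point is an analytic function of $\psi$ is a real question, and you give no argument for it.

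The genuine gap, however, is in your last paragraph. You correctly isolate the hard case---a one-dimensional component $\Gamma$ of the critical set---but what you write there is a statement of the desired conclusion, not a mechanism. Even granting that the normal Hessian eigenvalue of $\psi$ is constant along $\Gamma$, this is a single scalar constraint; it does not control the curvature of $\Gamma$ or the higher normal jets of $\psi$, and there is no indication of how ``analyticity, the boundary constraint, and simple connectivity'' would promote it to full rotational symmetry. The paper attacks this step by an entirely different route: if the local $F$'s fail to patch, one extracts a simply connected subdomain $\Omega'\subset\Omega$ on which $\Delta\psi=F(\psi)$ holds with the \emph{overdetermined} boundary data $\psi=\mathrm{const}$ and $\nabla\psi=0$ on $\partial\Omega'$, and then applies Serrin's theorem via the Aleksandrov moving-planes method to force $\Omega'$ to be a disk and $\psi$ radial on it (hence on $\Omega$ by analytic continuation). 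The technical point there is that $F$ need not be Lipschitz, but one can show $F'$ is bounded from below at its nonsmooth points, which suffices for the maximum-principle comparisons that drive moving planes. This symmetrization machinery is the missing ingredient in your proposal; without it or an equivalent rigidity tool, the final step does not go through.
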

\noindent The theorem is established using symmetrization techniques for semilinear elliptic equations, notably the moving planes method of Aleksandrov. Such techniques have been used for years to prove rigidity theorems for solutions to the Euler equation starting with Serrin's work \cite{Serrin} on overdetermined problems \cite{HN1,HN2,LZ,CDG,CZEW,DN}. Indeed, using the inverse function theorem, it is easy to see that if $\nabla\psi(x_*)\not=0$ and $\psi$ that satisfies \eqref{SEE}, then we can find $F_{x_*}$ for which $\eqref{SEE2}$ is satisfied in a neighborhood of $x_*.$ Using analytic continuation, $F_{x_*}$ can be extended to a single $F$ on a maximal set whose boundary is necessarily contained in the union of $\partial\Omega$ and $\{\nabla\psi=0\}.$ It is not terribly difficult to then argue that Theorem \ref{Classification} not holding implies the existence of some simply connected $\Omega'\subset\Omega$ with smooth boundary for which $\Delta\psi=F(\psi)$ on $\Omega',$ while $\nabla\psi=0$ on $\partial\Omega'.$ If $F$ were Lipschitz continuous, we would be able to apply Serrin's theorem \cite{Serrin}. While $F$ need not be Lipschitz continuous, it is still possible to show that $F'$ is necessarily bounded from below at nonsmooth points in the interior of $\Omega'$. This means that we can still use the maximum principle on the linearization of \eqref{SEE2}. With Theorem \ref{Classification}, the study of analytic steady states to the Euler equation on simply connected domains becomes the same as the study of all analytic solutions to all semi-linear elliptic equations \eqref{SEE2}. It is very important to remark, however, that there are many more solutions to \eqref{SEE} outside of the analytic class. In particular, it was shown in \cite{GPS,EFR} that Theorem \ref{Classification} cannot hold at any finite regularity. Still, the set of all solutions of the form \eqref{SEE2} forms a substantial set of such solutions and it is not difficult to imagine that solutions outside of that class only exist in highly degenerate situations. 
\subsection{Local and global structure of the set of steady states}
In light of the above, as a first step to studying the global properties of steady Euler solutions, it makes sense to study the global properties of solutions to all semilinear elliptic equations. While there is a considerable literature on semilinear elliptic equations, the author is unaware of a concerted effort to gain information about the global structure of the set of all solutions. In particular, a very interesting problem relates to the connectedness properties of this set. Let us give it a name in a particular setting: For $k\geq 2,$ we set
\[\mathcal{S}_k:=\{\psi\in C^k(\mathbb{T}^2): \{\psi,\Delta\psi\}=0\}/\sim,\] where we say that $\psi_1\sim\psi_2$ if there exist $\lambda\in(0,\infty),$ $c\in\mathbb{R},$ and $v\in\mathbb{R}^2$ so that 
\[\psi_2=\lambda\psi_1(x+v)+c.\]
It is clear that $\mathcal{S}_k$ is a closed subset of $C^k(\mathbb{T}^2).$ A first problem can be stated informally as follows. 
\begin{problem} \label{HotelProblem}
Does $\mathcal{S}_k$ have any nontrivial isolated points? Is $\mathcal{S}_k$ connected? If not, what are the dynamical implications of the multiple components? For $k$ large, can $\mathcal{S}_k$ be described globally as a suitably smooth Banach manifold away from a set of singular points of quantifiably lower dimension?
\end{problem}
Based on our quite limited knowledge, it is natural to guess that $\mathcal{S}_k$ will indeed be a nice infinite-dimensional manifold away from a few possibly degenerate points. A local result of this type was established in \cite{ShnirelmanBanach}. It is not clear, however, how bad it can behave near said degenerate points nor how many such points exist. In the case that $\psi_*$ is an analytic, Morse, steady Euler solution, it will satisfy \eqref{SEE2} for some $F=F_*\in C^\omega(\mathbb{R}).$ The local structure of $\mathcal{S}_k$ in a neighborhood of $\psi$ is then related to the invertibility properties of the Schr\"odinger operator
\[\mathcal{L}_*:=\Delta-F_*'(\psi_*)\] and the full linearized operator:
\[\mathcal{E}_*:=\{\psi_*,\Delta\cdot\}-\{\Delta\psi_*,\cdot\}=\{\psi_*,\mathcal{L}_*\cdot\}.\]
In the case that $\text{ker}(\mathcal{L}_*)=\{0\},$ we can give a very satisfactory description of the local structure of $\mathcal{S}_k$
\begin{theorem}
Assume $\psi_*$ is an analytic Morse function for which $\text{ker}(\mathcal{L}_*)=\{0\}.$ Then, if we fix $k\geq 3,$ there exists $\epsilon>0$ so that $\mathcal{S}_k\cap B_{\epsilon}(\psi_*)$ can be represented as a graph over $C^k\cap \text{ker}(\mathcal{E}_*).$
\end{theorem}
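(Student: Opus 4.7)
The plan is to parametrize $\mathcal{S}_k$ locally near $\psi_*$ by a ``vorticity--streamfunction profile'' on the Reeb graph of $\psi_*$, and then to apply the Banach implicit function theorem to the resulting nonlinear elliptic system. I prefer this over a direct Lyapunov--Schmidt reduction of $F(\psi):=\{\psi,\Delta\psi\}$: the latter leaves an infinite-dimensional bifurcation equation (indexed by first integrals of the Hamiltonian flow of $\psi_*$) whose automatic vanishing is not manifest, since solutions to the full equation obviously satisfy it but this is exactly what we are trying to produce.

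Since $\psi_*$ is analytic and Morse on $\mathbb{T}^2$, it has a finite Reeb graph $G_*$, with vertices at the saddles of $\psi_*$ and a Reeb projection $\pi_*:\mathbb{T}^2\to G_*$. Let $\mathcal{G}$ denote the Banach space of profiles $\omega:G_*\to\mathbb{R}$ with $\omega\circ\pi_*\in C^{k-2}(\mathbb{T}^2)$; the pullback requirement imposes explicit matching conditions on $\omega$ at the vertices of $G_*$, dictated by the analytic unfolding of $\psi_*$ at each saddle. By structural stability of Morse functions, each $\psi$ in some $C^k$-neighborhood $U$ of $\psi_*$ carries a canonical homeomorphism $\rho_\psi:G_\psi\to G_*$ depending smoothly on $\psi$, and any $\psi\in\mathcal{S}_k\cap U$ determines a unique profile $\omega_\psi\in\mathcal{G}$ via $\Delta\psi=\omega_\psi\circ\rho_\psi\circ\pi_\psi$, because $\{\psi,\Delta\psi\}=0$ forces $\Delta\psi$ to be constant on each connected component of each level set of $\psi$.

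Next I would introduce the smooth map
\[
\Phi:U\times\mathcal{G}\to C^{k-2}(\mathbb{T}^2),\qquad \Phi(\psi,\omega):=\Delta\psi-\omega\circ\rho_\psi\circ\pi_\psi,
\]
so that $\mathcal{S}_k\cap U$ equals the $\psi$-projection of $\Phi^{-1}(0)$. At the base point $(\psi_*,\omega_*)$, with $\omega_*$ encoding $\Delta\psi_*=F_*(\psi_*)$, the derivative $D_\psi\Phi(\psi_*,\omega_*)$ is precisely $\mathcal{L}_*$: on each edge of $G_*$ the Reeb coordinate locally agrees with the value of $\psi$, so differentiating the subtracted term reproduces the $-F_*'(\psi_*)\dot\psi$ correction. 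The hypothesis $\ker\mathcal{L}_*=\{0\}$, together with Schauder theory, makes $\mathcal{L}_*:C^k\to C^{k-2}$ a Banach isomorphism, so the implicit function theorem will produce a smooth solution map $\Psi:\mathcal{G}\cap B_\delta(\omega_*)\to C^k$ with $\Psi(\omega)\in\mathcal{S}_k$ for every nearby profile.

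Differentiating $\Phi(\Psi(\omega),\omega)\equiv 0$ gives $D\Psi(\omega_*)\dot\omega=\mathcal{L}_*^{-1}(\dot\omega\circ\pi_*)$; as $\dot\omega$ varies over $\mathcal{G}$, its lifts $\dot\omega\circ\pi_*$ exhaust $\ker T\cap C^{k-2}$ with $T:=\{\psi_*,\cdot\}$, so the image of $D\Psi(\omega_*)$ is exactly $K:=\mathcal{L}_*^{-1}(\ker T\cap C^{k-2})=C^k\cap\ker\mathcal{E}_*$. By the uniqueness of $\omega_\psi$, the image of $\Psi$ coincides with $\mathcal{S}_k\cap B_\epsilon(\psi_*)$, which therefore realizes as a graph over $K$ under the projection along any closed complement. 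The hardest part will be the Morse-theoretic bookkeeping: pinning down $\mathcal{G}$ with the correct vertex matching conditions and proving that $\psi\mapsto\rho_\psi$ is smooth between Banach spaces of the relevant Hölder regularity, both of which ultimately rest on the analytic unfolding of $\psi_*$ at its finitely many saddles.
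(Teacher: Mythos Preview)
Your plan is sound and matches the paper's own treatment, which is extremely brief: the paper simply says the result ``can be seen as a manifestation of the implicit function theorem in infinite dimensions, with the essential aspects of the argument appearing in Section~2.6 of \cite{CZEW},'' and gives no further details. Your Reeb-graph parametrization is exactly the natural way to implement that IFT argument for a general Morse $\psi_*$ (in \cite{CZEW} the base point is a shear flow, so the ``profile space'' $\mathcal G$ is just functions of $y$, which is the same object in that special case). Your identifications $D_\psi\Phi(\psi_*,\omega_*)=\mathcal L_*$ and $\mathrm{im}\,D\Psi(\omega_*)=\mathcal L_*^{-1}(\ker T\cap C^{k-2})=C^k\cap\ker\mathcal E_*$ are correct, and the surjectivity of $\Psi$ onto $\mathcal S_k\cap B_\epsilon(\psi_*)$ via the uniqueness clause of the IFT is the right way to close the loop.

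Two small points to watch when you write it out. First, the sentence ``the Reeb coordinate locally agrees with the value of $\psi$'' is doing real work: if you instead made $\rho_\psi$ a genuine graph homeomorphism sending critical values of $\psi$ to those of $\psi_*$, the $\psi$-derivative of $\omega_*\circ\rho_\psi\circ\pi_\psi$ would pick up a finite-rank correction (through $\dot\psi$ evaluated at the critical points), and $D_\psi\Phi$ would only be $\mathcal L_*$ plus finite rank. That is harmless for Fredholmness but would force you to check the kernel again; your convention (extend $\omega$ slightly past the vertices and evaluate at $\psi(x)$ directly) avoids this and should be stated explicitly. Second, ``graph over $K$ under the projection along any closed complement'' presupposes that $\ker T\cap C^{k-2}$ is complemented in $C^{k-2}$; this is true (angle-averaging in action--angle coordinates gives a bounded projector, with the saddle neighborhoods handled by the same Morse normal form you already invoke), but it is worth one line since complementation is not automatic in $C^k$ spaces.
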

\noindent This can be seen as a manifestation of the implicit function theorem in infinite dimensions, with the essential aspects of the argument appearing in Section 2.6 of \cite{CZEW}. Analyticity of $\psi_*$ is probably not necessary. An example showing the necessity of the condition on the triviality kernel of $\mathcal{L}_*$ is also given in the same paper, where a "singular point" in the language of Problem \ref{HotelProblem} is studied. Such implicit function type theorems can also be found in the earlier works \cite{CS,CDG}. 

In view of the preceding Theorem, the main issue is to understand the local structure of steady states near points where the kernel of $\mathcal{L}_*$ is nontrivial. In general, this appears to be a very difficult problem and our lack of understanding of this case is the primary obstruction to resolving Problem \ref{HotelProblem}. 

\subsection*{Dynamics near Steady Solutions}
The dynamics near steady solutions to the Euler equation turns out to be extremely rich and diverse, depending on the particular steady solution we start with. In two dimensions, they can be used to establish long-time norm growth \cite{N,D,KS,Z}, nontrivial periodic and quasi periodic motion \cite{BHM,HHM,HHR,HR, HHR2,DDMW,DDMW2,GIP}, long-time relaxation \cite{BM}, and even nonuniqueness at low regularity \cite{DS,DSReview,Vishik,ABC}. In three dimensions they can be used to a similar effect and also to construct finite-time singularities \cite{EP,CMZIPM}, as we shall discuss later. For now, we will content ourselves with looking at two effects of norm growth that we learn from studying steady states, both of which will be discussed in more detail later: shearing and hyperbolic stretching.

\section{Asymptotic Behavior}
One of the most interesting aspects of the 2d Euler equation is the phenomenon of infinite-time relaxation. The long-time relaxation of 2d ideal fluids, despite the lack of a direct relaxation mechanism, is confirmed by numerous numerical experiments, statistical mechanics arguments, as well as personal experience. 
\subsection{Passive scalars and inviscid damping}
Let us begin with a simple explanation that comes from the study of transport by autonomous velocity fields. If $\Omega$ is a smooth bounded domain and $u:\Omega\rightarrow\mathbb{R}^2$ is tangent to $\partial\Omega$ and is divergence-free, it is well known that $\Omega$ is foliated by (i) periodic loops and (ii) trajectories that either start and end at zeros of $u$ (fixed points). For generic $u,$ this means that almost every trajectory is periodic. The period is also generically nonconstant. This leads to a concept called phase mixing:
\begin{theorem}\label{PassiveScalarThm}
Let $\Omega\subset\mathbb{R}^2$ be a smooth bounded domain and assume that $\psi\in C^2(\bar\Omega)$ is constant on the connected components of $\partial\Omega$. Let $u=\nabla^\perp\psi,$ and assume the associated Lagrangian flow map has a nowhere constant period function. Then, all $L^2$ solutions to  
\begin{equation}\label{PassiveScalar}\partial_t f+u\cdot\nabla f=0\end{equation}
satisfy
\[u\cdot\nabla f\rightharpoonup 0\] as $t\rightarrow\infty.$ In other words, every $L^2$ solution to \eqref{PassiveScalar} converges weakly to its $L^2$ projection onto the linear space of steady solutions of \eqref{PassiveScalar}.
\end{theorem}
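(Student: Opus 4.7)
The plan is to introduce action-angle coordinates on each open region of regular periodic orbits of $u$, write the transported solution explicitly in those coordinates, and then extract weak convergence via Fourier expansion in the angle variable combined with a Riemann-Lebesgue argument in the action variable.

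First I would decompose $\Omega = \Omega_{\text{sing}} \cup \Omega_{\text{reg}}$, where $\Omega_{\text{sing}}$ is the union of critical points of $\psi$ and the separatrices joining them; this set has Lebesgue measure zero since $\psi \in C^2$, and $\Omega_{\text{reg}}$ is the disjoint union of (at most countably many) open cells each foliated by closed periodic orbits. On one such cell, symplectic reduction for the Hamiltonian $\psi$ produces global action-angle coordinates $(I,\theta) \in J \times \mathbb{T}$, measure-preserving and satisfying $u \cdot \nabla = \omega(I)\,\partial_\theta$, where $\omega(I) = 2\pi/T(I)$ is the angular frequency. The hypothesis of nowhere constant period translates to $\omega$ having no flat intervals, and for $C^1$ $\omega$ this forces $\omega' \neq 0$ almost everywhere on $J$.

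The transport equation then integrates explicitly as $f(t,I,\theta) = f_0(I,\theta - \omega(I) t)$, and the angular Fourier expansion $f_0 = \sum_n \hat f_n(I) e^{in\theta}$ yields
\[
f(t,I,\theta) = \hat f_0(I) + \sum_{n \neq 0} \hat f_n(I)\, e^{in\theta}\, e^{-in\omega(I) t},
\]
where $\hat f_0(I)$ is exactly the orbital average of $f_0$ and therefore the $L^2$ projection onto the steady subspace on this cell. Testing against $\phi \in L^2$ with angular Fourier coefficients $\hat\phi_n(I)$,
\[
\langle f(t) - \hat f_0,\phi \rangle = \sum_{n \neq 0} \int_J \hat f_n(I)\,\overline{\hat\phi_n(I)}\, e^{-in\omega(I) t}\, dI.
\]
A piecewise change of variable $s = \omega(I)$ on intervals of monotonicity reduces each summand to a standard Fourier integral that vanishes as $t \to \infty$ by Riemann-Lebesgue. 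Two applications of Cauchy-Schwarz give the $t$-independent dominating bound $\sum_{n \neq 0} \int_J |\hat f_n(I)\hat\phi_n(I)|\, dI \leq \|f_0\|_{L^2}\|\phi\|_{L^2}$, so dominated convergence lets us pass the limit inside the sum. Summing over cells and using conservation of $\|f\|_{L^2}$ to upgrade subsequential weak limits to the global projection onto steady solutions finishes the proof that $f(t) \rightharpoonup P_{\mathrm{steady}} f_0$; the statement $u \cdot \nabla f \rightharpoonup 0$ follows immediately since $P_{\mathrm{steady}} f_0 \in \ker(u \cdot \nabla)$.

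The main obstacle is geometric: one must verify that the cell decomposition and action-angle coordinates can be set up globally, and that the Riemann-Lebesgue step survives the boundary degeneracies of $\omega$ (blow-up of $T$ near separatrices, vanishing or infinite $\omega'$ near elliptic centers), as well as the possibility (for merely $C^2$ $\psi$) that \emph{nowhere constant} is weaker than $\omega' \neq 0$ almost everywhere. For generic $\psi$ the Reeb graph of $\psi$ is a finite tree and the cells are finite in number with well-controlled boundaries, which reduces the argument to elementary measure theory; the general case is handled by the classical structure theorem for Hamiltonian flows on surfaces together with density of smooth compactly supported test functions in each cell.
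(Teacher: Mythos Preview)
Your approach via action--angle coordinates and Riemann--Lebesgue is the classical route and is correct in outline; it differs from the paper's sketch, which instead constructs time-dependent commuting vector fields $a\cdot\nabla$ and $b\cdot\nabla + t\,a\cdot\nabla$ (in your coordinates these are essentially $\partial_\theta$ and $\partial_I + t\,\omega'(I)\partial_\theta$) and integrates by parts to obtain the quantitative bound $|(a\cdot\nabla f,\phi)_{L^2}|\le C t^{-1}|f|_{H^1}|\phi|_{H^1}$. Both arguments rest on the same action--angle structure, but the paper's vector-field method delivers an explicit decay rate in negative Sobolev norms---the currency needed for the nonlinear applications such as Theorem~\ref{BM}---whereas your Fourier argument is more elementary and gives the bare qualitative weak limit for all $L^2$ data without assuming any regularity on $f_0$.

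One point to fix: your assertion that ``nowhere constant'' plus $C^1$ forces $\omega'\neq 0$ almost everywhere is false---integrate a nonnegative continuous function that vanishes exactly on a fat Cantor set to produce a strictly increasing $C^1$ function with $\omega'=0$ on a set of positive measure. You correctly flag this as an obstacle at the end, but be aware that your Riemann--Lebesgue step genuinely depends on it: without $\omega'\neq 0$ a.e.\ the pushforward $\omega_*(g\,dI)$ can have a singular continuous part, and such measures need not be Rajchman, so $\int_J g(I)\,e^{-in\omega(I)t}\,dI$ need not tend to zero. In the non-degenerate (Morse) setting the paper explicitly works in, the period is real-analytic on each cell and the issue evaporates; for the general $C^2$ statement you should either read ``nowhere constant'' as the stronger hypothesis $T'\neq 0$ a.e.\ or supply an additional argument.
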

\begin{remark}
In general, the problem \eqref{PassiveScalar} can accommodate time-periodic and quasi-periodic solutions. These can also be embedded directly into the Euler equation in any domain using the idea of \cite{CrF}. There are also a number of constructions of periodic and quasi-periodic solutions in the Euler equation \cite{BHM} and related models \cite{GIP}. 
\end{remark}
There are a number of ways to prove this theorem. Quantitative versions can be found in \cite{BCE} along with a recent work \cite{DEG2}. As is explained in \cite{DEG2}, in non-degenerate settings\footnote{When $\psi$ is a Morse function and the period function is also Morse.}, it is possible to construct time-independent vector fields $a$ and $b$ for which: \[a\cdot\nabla,\qquad b\cdot\nabla+t\, a\cdot\nabla \] commute with the transport operator $\partial_t+u\cdot\nabla.$ As a result, it is not difficult to see that any solution to \eqref{PassiveScalar} satisfies:
\[|(a\cdot\nabla f,\phi)_{L^2}|\leq \frac{C}{t}|f|_{H^1}|\phi|_{H^1}, \] as $t\rightarrow\infty.$ 

Since the 2d Euler equation is simply a nonlinear transport equation for the vorticity:
\[\partial_t\omega+u\cdot\nabla\omega=0,\]
\[u=\nabla^\perp\Delta^{-1}\omega,\] it is natural to ask whether phase mixing can occur in the 2d Euler equation. That it occurs in the linearized Euler equation, say around the solution $u_*(x,y)=\begin{pmatrix}y\\
0\end{pmatrix},$ was observed in the 19th century. That this also occurs in some settings for the full Euler equation was proven in a landmark paper \cite{BM}. See also \cite{IJ,MZ} for some recent developments. Let us give a \emph{consequence} of the main theorem of \cite{BM}, which is too detailed to discuss here.  
\begin{theorem}\label{BM}
Fix $u_*(x,y)=\begin{pmatrix}y\\
0\end{pmatrix},$ on $\mathbb{T}\times\mathbb{R}.$ Then, every solution to the Euler equation that is initially close to $u_*$ in a suitable topology satisfies:
\[u\rightarrow \begin{pmatrix}u_\infty(y)\\
0
\end{pmatrix}\] strongly in $L^2$ as $t\rightarrow\infty.$
\end{theorem}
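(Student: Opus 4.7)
The plan is to work in the moving frame of the Couette background and prove nonlinear stability of the zero perturbation in a strong topology. Write the Couette vorticity as $\omega_* \equiv -1$ and set $\Omega = \omega - \omega_*$, so that the vorticity equation reads $\partial_t \Omega + y\, \partial_x \Omega + V \cdot \nabla \Omega = 0$ with $V = \nabla^\perp \Delta^{-1} \Omega$ being the velocity perturbation. I will pass to the sheared variables $X = x - ty$, $Y = y$ and set $f(t, X, Y) = \Omega(t, X + tY, Y)$. The background transport then disappears, and the Biot--Savart law becomes a modified inverse Laplacian with Fourier symbol $-(k^2 + (\ell - tk)^2)^{-1}$. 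If one drops the quadratic term, $f$ is stationary, and this symbol produces the classical algebraic decay of the linearized velocity: roughly $\|V_1\|_{L^2} \lesssim t^{-1}$ and $\|V_2\|_{L^2} \lesssim t^{-2}$ on modes with nonzero $x$-frequency, since each such mode passes through its Orr critical time $t = \ell/k$ and is filtered by the shear thereafter.

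The heart of the proof is to propagate regularity of $f$ under the nonlinearity, and this is where the main obstacle lies. Near each Orr time the stream function is transiently amplified, coupling neighboring modes into a cascade of \emph{echoes}: an initial excitation at a single mode triggers resonant responses at its neighbors at successive critical times, and the ensuing chain causes any fixed finite Sobolev norm to grow without bound. To close the estimates one must therefore work in a Gevrey class. Following the strategy of Bedrossian--Masmoudi, I would design a time-dependent Fourier multiplier $A(t, D)$ whose symbol acquires additional weight on a mode precisely when it is near its critical time. The time derivative of $A$ then generates a negative commutator in the energy identity for $\|A f\|_{L^2}^2$, and the nonlinearity is decomposed into transport, reaction, and remainder pieces so that the worst echo contributions are absorbed by this commutator while the remainder is controlled by a smallness-of-data bootstrap. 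Constructing $A$ and carefully balancing the contribution of each resonance window against the commutator gain is the central technical step, and it is what fixes both the Gevrey exponent and the precise topology in which the initial perturbation must be small.

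Granted the uniform bound $\|A(t) f(t)\|_{L^2} \lesssim \|A(0) f(0)\|_{L^2}$, I would conclude as follows. Decompose $\Omega$ into its $x$-average $\bar\Omega(t, y)$ and the zero-$x$-mean remainder. The remainder, read in sheared coordinates, is uniformly bounded in Gevrey class, and combining this with the symbol estimate above forces its contribution to $V$ to tend to zero strongly in $L^2$. The $x$-average $\bar\Omega$ evolves only through the Reynolds stress $\partial_y \overline{V_2 \Omega}$, and the $t^{-2}$ decay of $V_2$ makes this term integrable in time, so $\bar\Omega(t, y) \to \bar\Omega_\infty(y)$ strongly in $L^2$. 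Setting $u_\infty(y)$ to be the shear profile whose vorticity is $\bar\Omega_\infty$ and undoing the (isometric in $L^2$) change of variables yields the claimed convergence of $u$ to $(u_\infty(y), 0)$.
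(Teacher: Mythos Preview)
Your outline is correct and follows the Bedrossian--Masmoudi strategy that the paper itself invokes: the paper does not supply an independent proof of this theorem but states it as a consequence of \cite{BM}, sketching only the linear decay rates $|u_1|_{L^2}\lesssim t^{-1}$, $|u_2|_{L^2}\lesssim t^{-2}$ and naming the central obstruction (derivative growth of $\omega$ competing with the slow velocity decay, forcing one into infinite-regularity spaces). Your proposal fleshes this out accurately---the change to sheared coordinates, the echo cascade as the source of the regularity loss, the time-dependent Gevrey multiplier $A(t,D)$ absorbing the resonant interactions, and the split into $x$-average and zero-mean parts to extract $u_\infty$---so there is nothing to correct and no genuine divergence from the paper's (cited) approach.
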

The main idea is to build off of the decay mechanism in the linear problem:
\[\partial_t\omega+y\partial_x\omega=0\]
\[u=\nabla^\perp\Delta^{-1}\omega.\] Using the proof of Theorem \ref{PassiveScalar}, it is not difficult to check that the linearized Euler equation around $(y,0)$ satisfies:
\[|u_1|_{L^2}\leq \frac{C}{t}|\omega(0,\cdot)|_{H^1},\qquad |u_2|_{L^2}\leq \frac{C}{t^2}|\omega(0,.)|_{H^2}.\]
A major difficulty in the proof of Theorem \ref{BM} is that the somewhat slow decay of $u$ in the linear problem is accompanied by the growth of derivatives of $\omega,$ while the nonlinear term contains derivatives of $\omega.$ In the end, the asymptotic stability theorem is proven in certain infinite regularity spaces and, to the author's knowledge, it is not known what happens for small Sobolev perturbations (except at low regularity, where other time-periodic behavior can occur \cite{LZ,DN}).

\subsection{Emergence of special solutions}
Theorems \ref{PassiveScalarThm} and \ref{BM} suggest that the long-time behavior of 2d Euler solutions could be much simpler than what one would first guess. To make precise the conjectured behavior, let us fix a smooth bounded domain $\Omega\subset\mathbb{R}^2$ and restrict ourselves to the class of bounded vorticity solutions on $\Omega.$ We also let $S_t:L^\infty(\Omega)\rightarrow L^\infty(\Omega)$ denote the 2d Euler solution operator (acting on the vorticity).  
\begin{definition}
Define \[\mathcal{A}_{\infty}:=\{f\in L^\infty: \,S_{t_n}(\omega_0)\rightharpoonup f\,\,\, \text{for some} \,\,\omega_0\in L^\infty\,\,\, \text{and}\,\,\, t_n\rightarrow\infty\}.\] This is the weak limit set of all $L^\infty$ solutions to the 2d Euler equation on $\Omega.$
\end{definition}
\noindent We can now state what can be called the Relaxation Conjecture, due to Shnirelman \cite{Shnirelman2} and \v{S}ver\'ak \cite{SverakNotes}.
\begin{conjecture}\label{RelaxationConjecture} We have that
\begin{itemize}
\item $\mathcal{A}_\infty$ consists of those $f\in L^\infty $ for which $\{\mathcal{S}_t(f)\}_{t\in\mathbb{R}}$ is pre-compact in $L^2.$ 
\item For generic $f\in L^\infty$, we have that $\{S_{t}(f)\}_{t\in\mathbb{R}}$ is not pre-compact in $L^2.$  
\end{itemize}
\end{conjecture}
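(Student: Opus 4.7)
The plan is to treat the two bullets of Conjecture~\ref{RelaxationConjecture} as problems of very different difficulty. The easy inclusion in the first bullet, namely that pre-compactness of $\{S_t(f)\}_{t\in\mathbb{R}}$ in $L^2$ implies $f\in\mathcal{A}_\infty$, should follow from general dynamics. Setting $K:=\overline{\{S_t(f)\}}_{t\in\mathbb{R}}$, which is compact and $S_t$-invariant in $L^2$, Krylov--Bogolyubov produces a Borel probability measure on $K$ invariant under the flow, and Poincar\'e recurrence then yields a sequence $t_n\to\infty$ with $S_{t_n}(f)\to f$ strongly and hence weakly in $L^2$, so $f\in\mathcal{A}_\infty$ with the choice $\omega_0=f$.

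For the reverse inclusion, suppose $S_{t_n}(\omega_0)\rightharpoonup f$ in $L^2$ with $\|\omega_0\|_\infty=M$. The approach I would take is to build a strongly compact attractor containing $f$ out of the Casimir information of $\omega_0$. Since $\Delta^{-1}:L^2\to L^2$ is compact on a bounded planar domain, the energy $E(\omega):=\tfrac{1}{2}\int\omega\,\Delta^{-1}\omega$ is continuous under weak $L^2$ convergence on bounded sets, and conservation of energy along the Euler flow gives $E(f)=E(\omega_0)$. Moreover $f$ lies in the weak $L^2$ closure $\overline{R(\omega_0)}^{w}$ of the rearrangement class $R(\omega_0)$ of functions equimeasurable with $\omega_0$, a bounded, convex, weakly compact subset of $L^\infty\cap L^2$. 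The plan is to show that $\overline{R(\omega_0)}^{w}$ is forward- and backward-invariant under the Yudovich flow and that on each fixed energy level inside this closure the weak and strong $L^2$ topologies agree, so that $\{S_t(f)\}_{t\in\mathbb{R}}$ is automatically strongly pre-compact. The second property would follow if one could rule out nontrivial $t$-dependence of the Young measure of $S_t(f)$, since a genuinely moving Young measure would, by weak continuity of $E$, change the conserved energy, contradicting its constancy along the flow.

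For the genericity bullet, the natural strategy is a Baire category argument in $L^\infty(\Omega)$ (with its norm topology, in which it is Baire). Set $\mathcal{P}_M:=\{f:\|f\|_\infty\le M,\ \{S_t(f)\}\ \text{pre-compact in }L^2\}$; closedness of $\mathcal{P}_M$ in $L^2$ is essentially the Yudovich stability bound together with the inclusion analysis of Step~2. To show that $\mathcal{P}_M$ has empty interior, I would perturb any candidate $f\in\mathcal{P}_M$ by a small field whose stream function has a Morse structure with non-constant period function on each periodic component, so that Theorem~\ref{PassiveScalarThm} forces the linearized flow to mix weakly-but-not-strongly and thus creates weak limits strictly below the initial enstrophy. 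The Bedrossian--Masmoudi mechanism behind Theorem~\ref{BM} then suggests the nonlinear problem inherits the same failure of strong pre-compactness in an open set of small perturbations, populating any neighborhood of $f$ with non-recurrent orbits.

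The decisive obstacle is Step~2, which is an essentially Lagrangian statement about how much Casimir information survives a weak $L^2$ limit of Euler orbits. Energy passes to the limit, but all higher Casimirs are only weakly lower semicontinuous, and I do not see how to rule out a priori that a candidate weak attractor could have a strict subset of the Casimirs of $\omega_0$ while still failing to be strongly pre-compact under the flow. Relatedly, the solution operator $S_t:L^\infty\to L^\infty$ is not known to be continuous in the weak-$*$ topology uniformly in $t\in\mathbb{R}$, so even the flow-invariance of the weak closure of an orbit---needed for the proposed attractor to make sense---would require new Lagrangian compactness estimates that go beyond the current Yudovich framework.
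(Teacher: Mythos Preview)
The statement you are attempting to prove is presented in the paper as an \emph{open conjecture} (the Relaxation Conjecture of Shnirelman and \v{S}ver\'ak), not as a theorem. The paper offers no proof; immediately after stating it, the author remarks that it ``has only been established in small neighborhoods of shear flows in very high regularity'' and for scale-invariant solutions. There is therefore nothing in the paper to compare your proposal against, and your proposal should be read as a strategy toward an open problem rather than as a candidate alternative proof.

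On the substance of your proposal: you are commendably honest that the reverse inclusion in the first bullet is where the argument breaks, and your diagnosis is accurate---the claim that weak and strong $L^2$ topologies coincide on a fixed energy level inside $\overline{R(\omega_0)}^{w}$ is false in general (energy is weakly continuous but enstrophy and the higher Casimirs are not, so there is plenty of room for weak-but-not-strong convergence at fixed energy), and the Young-measure argument you sketch does not close because a nontrivially evolving Young measure need not change the energy. This is exactly the open heart of the conjecture.

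There is also a gap in what you call the ``easy'' inclusion. Krylov--Bogolyubov produces \emph{some} invariant probability measure on $K=\overline{\{S_t(f)\}}$, and Poincar\'e recurrence then gives recurrence only for $\mu$-almost every point; the specific point $f$ may well be $\mu$-null, so you cannot conclude $S_{t_n}(f)\to f$. (Think of a gradient-like flow on a compact set: orbits are precompact, yet a generic initial point is never in its own $\omega$-limit set.) To place $f$ in $\mathcal{A}_\infty$ you would instead need to produce some $\omega_0$---not necessarily $f$---and a sequence $t_n\to\infty$ with $S_{t_n}(\omega_0)\rightharpoonup f$; a natural candidate is an element of the $\alpha$-limit set of $f$, but turning this into a proof requires continuity properties of $S_t$ on $K$ over unbounded time intervals that are, as you yourself note at the end, not available in the current Yudovich framework.
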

\begin{remark}
Under the assumptions of Theorem \ref{PassiveScalarThm}, the analogue of $\mathcal{A}_\infty$ consists just of steady solutions to \eqref{PassiveScalar}. For the general passive scalar case, $\mathcal{A}_\infty$ consists of a wider class that can also include time-periodic and quasi-periodic solutions. Moreover, the analogue of the above Conjecture holds for the passive scalar setting merely under the assumption that the velocity field is $C^1$ (and autonomous, of course). See Figure \ref{LongTimeCaricature} for a caricature of the dynamics. 
\end{remark}
\begin{figure}
  \centering\includegraphics[width=0.6\linewidth]{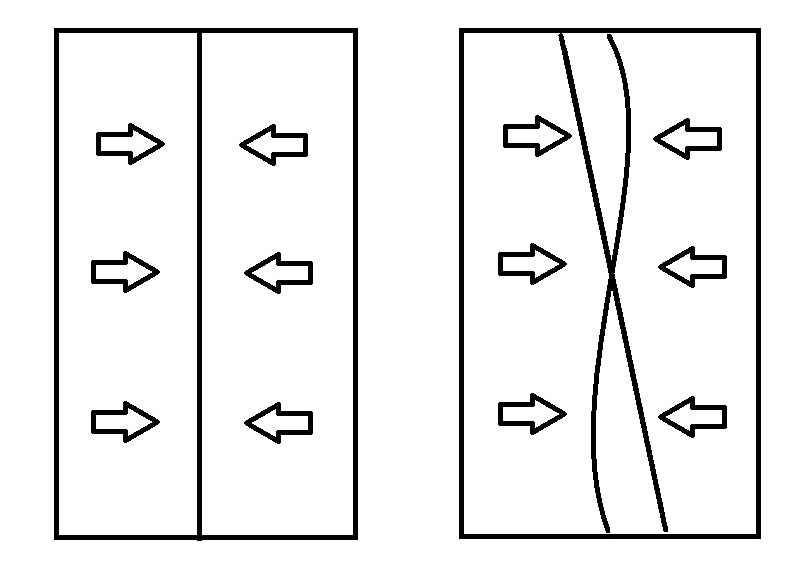}
  \caption{Caricature of the long-time dynamics for passive scalars (left) and the 2d Euler equation (right). The set in the middle, in each case, is the set $\mathcal{A}_\infty$ that attracts all solutions. In the passive scalar case it is an infinite dimensional linear space that consists of steady states, time-periodic solutions, etc, though generically it only consists of steady states. For the 2d Euler equation, it is conjectured to consist of those solutions lying on compact orbits (including steady states, periodic solutions, etc.). While in the passive scalar case the limit set is linear, the limit set in the 2d Euler case is certainly nonlinear and may be singular, as discussed above.}
  \label{LongTimeCaricature}
\end{figure}

This conjecture has only been established in small neighborhoods of shear flows in very high regularity \cite{BM,MZ,IJ}. An analogue has also been established for scale-invariant solutions in \cite{EMS}.

\subsection{Lyapunov stability}
Stepping back from asymptotic stability and mixing for a moment, it is natural to think that solutions to the Euler equation should enjoy certain stability properties. Indeed, if we view an Euler solution as the optimal way to accomplish a certain task, it is natural to think that solutions to the Euler equation should enjoy some kind of stability properties. This has been rigorously established for a wide variety of steady flows using a variational principle that goes back to work of Lord Kelvin \cite{Kelvin} and also Arnold \cite{ArnoldKhesin}. An example of such a stability theorem is
\begin{theorem}
Let $\Omega\subset\mathbb{R}^2$ be a smooth bounded domain and assume that $\psi_*\in C^2(\bar\Omega)$ be a steady state solution to the Euler equation satisfying
\[\Delta\psi_*=F_*(\psi_*),\] for $F_*\in C^1.$ If $F_*'>0$ then $\psi_*$ is Lyapunov stable in $H^2.$ \end{theorem}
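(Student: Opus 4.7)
The plan is to use the classical Kelvin--Arnold variational method: construct a conserved functional of the 2d Euler flow for which $\omega_*=F_*(\psi_*)$ is a critical point and whose second variation is coercive in the $H^2$ norm of the stream function. Along the Euler flow, the kinetic energy $E[\omega]=-\tfrac12\int_\Omega\psi\omega\,dx$ (with $\psi|_{\partial\Omega}=0$ after the standard normalization) and every Casimir $C_G[\omega]=\int_\Omega G(\omega)\,dx$ are conserved. I would first extend $F_*$ from the compact interval $\psi_*(\bar\Omega)$ to all of $\R$ as a $C^1$ function satisfying $0<c_0\le F_*'\le C_0$ globally; since $F_*'>0$ is continuous on a compact set, this is possible. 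Then I would set $G$ to be the antiderivative of $F_*^{-1}$, so that $G''(s)=1/F_*'(F_*^{-1}(s))\in[1/C_0,1/c_0]$ uniformly. The Lyapunov candidate is $H[\omega]:=E[\omega]+C_G[\omega]$, which is conserved in time and well defined on $L^2$ perturbations of $\omega_*$.

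Next I would verify that $\omega_*$ is a critical point of $H$ and compute $H[\omega]-H[\omega_*]$. Writing $\omega=\omega_*+\eta$, $\psi=\psi_*+\phi$ with $\phi|_{\partial\Omega}=0$, the identities $\int_\Omega\phi\,\omega_*=\int_\Omega\psi_*\,\eta$ (integration by parts) and $-\int_\Omega\phi\,\eta=\|\nabla\phi\|_{L^2}^2$ combine with the crucial algebraic fact $G'(\omega_*)=F_*^{-1}(\omega_*)=\psi_*$ to cancel all first-order terms, giving
\begin{equation}
H[\omega]-H[\omega_*]=\tfrac12\|\nabla\phi\|_{L^2}^2+\int_\Omega\bigl[G(\omega_*+\eta)-G(\omega_*)-G'(\omega_*)\eta\bigr]dx.
\end{equation}
The integrand is a Bregman remainder for the convex function $G$, and the two-sided bound on $G''$ gives
\begin{equation}
\tfrac12\|\nabla\phi\|_{L^2}^2+\tfrac1{2C_0}\|\eta\|_{L^2}^2\;\le\;H[\omega]-H[\omega_*]\;\le\;\tfrac12\|\nabla\phi\|_{L^2}^2+\tfrac1{2c_0}\|\eta\|_{L^2}^2.
\end{equation}

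To close Lyapunov stability I would apply the lower bound at time $t$ and the upper bound at $t=0$, together with conservation of $H$, obtaining $\|\nabla\phi(t)\|_{L^2}^2+\|\Delta\phi(t)\|_{L^2}^2\lesssim\|\phi_0\|_{H^2}^2$. Elliptic regularity and the Poincar\'e inequality on the bounded smooth domain $\Omega$ upgrade this to $\|\phi(t)\|_{H^2}\lesssim\|\phi_0\|_{H^2}$, which is the desired $H^2$ stability. The main obstacle in this plan is not the coercivity itself but the bookkeeping around boundary values and conservation laws: one must normalize $\psi_0$ and $\psi_*$ to share their boundary constant (harmless once Kelvin's circulation theorem is invoked, or else one restricts to perturbations with matching total circulation on $\partial\Omega$); one must verify that $C_G$ is genuinely conserved in the $H^2$ class (routine, since $\omega(t)$ is a volume-preserving rearrangement of $\omega_0$); and the global extension of $F_*$ must be chosen so that $G$ is defined and uniformly convex on all of $\R$. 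None of these are serious difficulties, but each has to be set up explicitly before the coercivity estimate above becomes a rigorous stability theorem.
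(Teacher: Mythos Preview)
Your proposal is correct and follows exactly the approach the paper indicates: the paper's proof consists of the single sentence ``The proof of the theorem is based on observing that $\psi_*$ minimizes a certain conserved quantity built out of the energy and $\int F(\omega)$ for some properly chosen $F$,'' and your plan spells out precisely this Kelvin--Arnold argument with $F=G$ the antiderivative of $F_*^{-1}$. The computation of the Bregman remainder, the two-sided bound from $G''\in[1/C_0,1/c_0]$, and the upgrade to $H^2$ via elliptic regularity are all standard and correctly outlined; the boundary and circulation caveats you flag are the right ones to track but pose no real obstacle.
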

\noindent The proof of the theorem is based on observing that $\psi_*$ minimizes a certain conserved quantity built out of the energy and $\int F(\omega)$ for some properly chosen $F.$ Note that there are many more Lyapunov stable steady states, notably the case $F_*\equiv C.$ While this is a very classical subject and much is known, there remain several open problems about Lyapunov stability. One interesting problem relates to whether orbitally stable steady states are actually Lyapunov stable (see \cite{CG,WZ,EMaximizers,DM}). Another interesting problem is whether there is Lyapunov stability in the Yudovich class \cite{Yudovich}. Note that Lyapunov stability in $H^2$ implies Lyapunov stability in $W^{2,p}$ for $2\leq p<\infty,$ using the conservation laws. 

\subsection*{Dynamical consequences of Lyapunov stability}
Let us now take a brief look at the dynamical implications of Lyapunov stability. 
We fix a smooth region $\Omega$ that could either be a bounded subset of $\mathbb{R}^2$ or the periodic channel $\mathbb{T}\times[0,1].$ Let us also fix a Lyapunov stable $\psi_*$ with velocity $u_*$ and vorticity $\omega_*.$ 
In particular, we will want to understand what can be said about the flow map:
\[\frac{d}{dt}\Phi(x,t)=u(\Phi(x,t),t),\] when $u$ solves the Euler equation and is close to $u_*.$
A first observation that one can make is that the $L^2$ stability at the level of $\omega,$ which is transported, strongly constrains the flow of particles. In particular, since $\omega$ must remain close to $\omega_*$, and since $\omega=\omega_0\circ\Phi_t^{-1}$ and $\omega_0$ is close to $\omega_*,$ this puts a constraint on the flow map $\Phi_t.$ 
In particular, it is easy to see that  
\[|\omega-\omega_*|_{L^2}<\epsilon\implies |\omega_*\circ\Phi(\cdot,t)-\omega_*|_{L^2}<2\epsilon.\]
If $\omega_*$ is monotone, in some sense, this essentially puts a constraint on one component of $\Phi_t.$ 
In particular, Eulerian Lyapunov stability gives Lagrangian stability for the component of the flow that is orthogonal to the direction of transport.

One can also generalize this question beyond the case of "monotone" solutions to the Euler equation to general vector fields:
\begin{question}\label{Highway}
Assume that 
\[|u(\cdot,t)-u_*(\cdot)|_{H^1}<\epsilon\] for all $t\in\mathbb{R}$, with $\epsilon>0$ as small as we like. What can be said about the flow of particles generated by $u?$ As a particular case, if we take $u_*(x,y)=\begin{pmatrix}y\\ 0\end{pmatrix}$ on $\mathbb{T}\times [0,1],$ does such a $u$ necessarily exhibit a type of phase mixing? More concretely, could it be that all $L^2$ solutions to the transport equation with such a $u$ have compact orbits in $L^2?$
\end{question}
The question essentially asks whether "phase mixing" is stable in any sense--the hope being that the answer to the final question is \emph{negative.}  
A partial answer to this question was given in the paper \cite{DEJ}, where it was shown that "twisting" is stable. Let us define this precisely in a special case. Fix $\Omega=\mathbb{T}\times[0,1],$ the periodic channel, which can also be identified with an annulus. For $x\in\Omega,$ we may consider the winding number, $N(x,t)$ of $\Phi_t(x)$ around the center of the annulus.
\begin{definition}
    We say that $\Phi_t$ is twisting if the winding number $N(\cdot,t)$ satisfies:
    \[\sup_{x,y\in \Omega}|N(x,t)-N(y,t)|\rightarrow\infty\] as $t\rightarrow\infty.$ We say that a vector field $u$ is twisting if its corresponding flow map $\Phi_t$ is twisting.    
\end{definition}
A first result proved in \cite{DEJ} is
\begin{theorem}
If $u_*$ is autonomous and twisting and $|u(\cdot,t)-u_*(\cdot)|_{L^2}$ is sufficiently small for all $t>0,$ then $u$ is twisting.
\end{theorem}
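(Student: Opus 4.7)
The strategy is to transfer the winding structure of $u_*$ to the perturbed flow by exploiting that the stream function of $u_*$ is almost conserved along trajectories of $u$. Write $u_* = \nabla^\perp\psi_*$ and $v := u - u_*$, so that $\|v(\cdot,t)\|_{L^2}\leq \varepsilon$ uniformly in $t$. Since $u_*\cdot\nabla\psi_* = 0$, along any trajectory of $u$ we have
\[
\frac{d}{ds}\psi_*(\Phi^u_s(x)) \;=\; v(\Phi^u_s(x),s)\cdot\nabla\psi_*(\Phi^u_s(x)).
\]
Integrating in time and using measure-preservation of $\Phi^u_s$ together with Cauchy--Schwarz, one obtains the almost-conservation estimate
\[
\|\psi_*\circ\Phi^u_t - \psi_*\|_{L^2_x} \;\leq\; C\,\|\nabla\psi_*\|_{L^\infty}\,\varepsilon\, t,
\]
so in an $L^2$ sense particles drift off their initial level sets of $\psi_*$ only at rate $\varepsilon$.

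Next, lift to the universal cover of the annulus so that the winding number reads $N^u(x,t) = \int_0^t u_1(\Phi^u_s(x),s)\,ds$, and decompose it as
\[
N^u(x,t) \;=\; \underbrace{\int_0^t u_{*,1}(\Phi^u_s(x))\,ds}_{I_1(x,t)} \;+\; \underbrace{\int_0^t v_1(\Phi^u_s(x),s)\,ds}_{I_2(x,t)}.
\]
Cauchy--Schwarz with measure-preservation gives $\|I_2(\cdot,t)\|_{L^2_x}\leq \varepsilon\,t$. The twisting hypothesis for the autonomous flow supplies two level sets $\{\psi_*=\psi_1\}$, $\{\psi_*=\psi_2\}$ whose rotation rates $\rho_*(\psi_i) := \lim_{t\to\infty} t^{-1}\int_0^t u_{*,1}(\Phi^{u_*}_s(x))\,ds$ (for $x$ on the respective level set) differ by some $\delta > 0$.

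The main step is to argue that $I_1(x,t)\approx t\,\rho_*(\psi_*(x))$ also for the perturbed flow, in an averaged sense. Foliate $\Omega$ into thin tubes around $\{\psi_*=\psi_i\}$; the almost-conservation estimate, combined with Chebyshev's inequality, guarantees that a positive-measure set of starting points in each tube has trajectories remaining within a slightly thicker tube throughout $[0,t]$, provided $t\lesssim \varepsilon^{-1}$. On such trajectories the time-average of $u_{*,1}$ is close to the autonomous rotation rate on the tube, yielding $I_1(x,t) = t\,\rho_*(\psi_i) + o(t) + O(\varepsilon\, t)$. Combining with the $L^2$ control of $I_2$ and a further application of Chebyshev produces two points $x_1,x_2$ (one per tube) with
\[
|N^u(x_1,t) - N^u(x_2,t)| \;\geq\; t\,\delta/2,
\]
so $\sup_{x,y}|N^u(x,t)-N^u(y,t)|\to\infty$ along a suitable sequence $t_n\to\infty$, which is the twisting of $u$.

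The hardest step is the control of $I_1$. The $L^2$-smallness of $v$ yields no pointwise control of trajectories: a naive Grönwall estimate on $\Phi^u - \Phi^{u_*}$ loses exponentially in time, and the drift estimate $\|\psi_*\circ\Phi^u_t - \psi_*\|_{L^2}\leq C\varepsilon t$ is only informative for $t\lesssim \varepsilon^{-1}$. One must therefore replace pointwise trajectory tracking by measure-theoretic bookkeeping, combining ergodization along closed orbits of the autonomous flow (which is what produces $\rho_*$ in the first place) with the averaged drift off level sets, and choose $\varepsilon$ quantitatively small compared to the rotation gap $\delta$. The delicate point is bridging the long-time qualitative twisting of $u_*$ with the fundamentally short-time quantitative stability of the $L^2$ perturbation.
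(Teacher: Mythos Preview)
Your setup is sound and you have correctly identified the central tension: the na\"ive drift bound $\|\psi_*\circ\Phi^u_t-\psi_*\|_{L^2}\le C\varepsilon t$ loses all information past $t\sim\varepsilon^{-1}$. But you do not actually resolve this tension. Your estimate $|N^u(x_1,t)-N^u(x_2,t)|\ge t\delta/2$ is only established on the window $t\lesssim\varepsilon^{-1}$, so the winding discrepancy you produce is at most of order $\delta/\varepsilon$, which is finite. The claimed passage to ``a suitable sequence $t_n\to\infty$'' is unjustified: $\varepsilon$ is fixed, and nothing in your argument propagates the conclusion beyond the time at which your tube confinement breaks down. As written, the argument proves that the winding gap becomes large (of order $\varepsilon^{-1}$), but not that it is unbounded; this is precisely the gap you flag in your last paragraph, and it is fatal.

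The paper does not give a self-contained proof here but records the two estimates from \cite{DEJ} that close the argument, and both are strictly stronger than what you derive. First, the vertical drift is upgraded from linear to logarithmic: $|u_*(\Phi_2(\cdot,t))-u_*(y)|_{L^2}\le C\sqrt{\varepsilon}\log(1+t)$. Second, and more directly useful, the ``highway'' estimate $|\tilde\Phi_1-t\,u_*(\tilde\Phi_2)-x_1|_{L^2}\le C\sqrt{\varepsilon}\,t$ controls the lifted horizontal position against $t$ times the autonomous speed at the \emph{current} height $\tilde\Phi_2(x,t)$, not the initial height. This sidesteps your difficulty entirely: since $\Phi^u_t$ is measure-preserving, the distribution of $\tilde\Phi_2(\cdot,t)$ equals that of $x_2$ for every $t$, so at every time there are positive-measure sets of particles sitting at levels with autonomous speeds $\rho_1$ and $\rho_2$. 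Chebyshev on the highway estimate then yields points with winding gap $\gtrsim(\delta-C\sqrt{\varepsilon})t$ for all $t$, and choosing $\varepsilon$ small against $\delta$ gives twisting. Your approach of tracking trajectories through their \emph{initial} tubes is the wrong bookkeeping; the missing idea is to compare winding to the speed at the final position and invoke measure preservation instead.
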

In fact, much more is shown. In the above setting, it is shown that 
\begin{equation}
\label{ArnoldDiff}|u_*(\Phi_2)-u_*(y)|_{L^2}\leq C\sqrt\epsilon \log (1+t),
\end{equation} for all $t\geq 0.$ 
In particular, it would take exponentially long amount time for an order 1 region of particles to move vertically. This is reminiscent of the Nekhoroshev bounds on Arnold diffusion \cite{Nekh}. Similarly, it is shown that the lift $\tilde\Phi(\cdot,t)$ of $\Phi(\cdot,t)$ to the universal cover of $\Omega$, $\mathbb{R}\times [0,1],$ satisfies:
\begin{equation}\label{HighwayStability}|\tilde\Phi_1-t u_*(\tilde\Phi_2)-x_1|_{L^2}\leq C\sqrt{\epsilon} |t|,\end{equation} for all $t.$
\eqref{HighwayStability} is an example of Lagrangian stability in the sense that it says that the new flow $\Phi_t,$ of $u$, when pulled back by the old flow $\Phi_t^*$ of $u_*$, does not grow too fast. 

Let us note that all of these results in fact only used the closeness of $u$ to $u_*$ in $L^2$. It would be very interesting to see what better results could be obtained using the full assumption of closeness in $H^1$ in Question \ref{Highway}. One particularly interesting question is whether it is possible to remove the $\log$ in \eqref{ArnoldDiff}. Doing so would have a few nice consequences. Regarding phase mixing, it is clear that just using closeness in $L^2,$ it is easy to perturb $u_*$ so that it becomes piecewise constant and thus exhibits no phase mixing. It seems difficult to achieve this with a small $H^1$ perturbation. Showing that phase mixing is stable in $H^1$ or even stronger norms, i.e. giving a \emph{negative} answer to the final part of Question \ref{Highway}, could represent a major step towards establishing the second part of the Relaxation Conjecture \ref{RelaxationConjecture} for 2d Euler solutions.


\section{Local and Global Solvability}

We now transition to discuss local and global solvability. The questions of local and global solvability for the Euler equation have been studied rigorously since the 1920's. A first question one can ask is: 
\begin{question}
In which function spaces $X$ should we search for solutions to the Euler equation?
\end{question}
Looking at the equation:
\begin{equation}\label{Euler1}\partial_t u+u\cdot\nabla u+\nabla p=0,\end{equation}
\begin{equation}\label{Euler2}\text{div}(u)=0,\end{equation} it makes sense to seek solutions that are continuously differentiable in $x$ and $t$. However, it is a fact of life, as L. Nirenberg once remarked, that the integer spaces $C^k$, $k\in\mathbb{N},$ are not generally good spaces for solvability. This was later confirmed for the Euler equation in \cite{EM,BL}. Despite this, it makes sense to term any solution $u$ that happens to be in the class $C^{1}_{x,t}$ a \emph{classical} solution\footnote{In order to avoid pathologies at spatial infinity, when we say classical solution on $\mathbb{R}^3,$ they should satisfy some mild conditions as $|x|\rightarrow\infty$ or satisfy some symmetry condition. The essential issue is to be able to define the pressure in a unique way \cite{EJSymm}. Otherwise, it is possible to take $u(x,t)=\begin{pmatrix}f(t)\\
0\end{pmatrix}$ and $p(x,t)=-f'(t)x_1,$ with $f(t)$ arbitrary.}, since it satisfies the equation in the classical sense. To the author's knowledge, the first local existence result appeared in separate works of Lichtenstein \cite{Lichtenstein} and Gunther \cite{Gunther} in spaces just a little stronger than $C^1.$ Let us recall the $C^{1,\alpha}$ spaces.
 For $0\leq \alpha\leq 1$ and any open $\Omega\subset\mathbb{R}^d$, we say that $f\in C^{1,\alpha}(\Omega)$ if $f\in C^1(\Omega)$ and
\[|f|_{C^{1,\alpha}}:=\sup_{x\in\Omega} |\nabla f(x)|+\sup_{x\not=y} \frac{|\nabla f(x)-\nabla f(y)|}{|x-y|^\alpha}<\infty.\]

\begin{theorem}\label{Classical}
$\Omega$ be a smooth bounded domain in $\mathbb{R}^d.$ For any $0<\alpha<1$ and $u_0\in C^{1,\alpha},$ tangent to $\partial\Omega$, there exists a $T_*>0$ and a unique solution $u\in C^{1,\alpha}(\bar\Omega\times[0,T_*))$ to \eqref{Euler1}-\eqref{Euler2} that is tangent to $\partial\Omega$ and with $u|_{t=0}=u_0$.
\end{theorem}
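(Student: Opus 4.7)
The plan is to reformulate via the vorticity and run a Picard iteration on the Lagrangian flow. Set $\omega=\mathrm{curl}\,u$ and $\omega_0=\mathrm{curl}\,u_0\in C^{0,\alpha}(\bar\Omega)$. The first ingredient is a bounded-domain Biot--Savart operator: given $\omega\in C^{0,\alpha}$ satisfying the natural compatibility conditions, it produces the unique divergence-free $u$ tangent to $\partial\Omega$ with $\mathrm{curl}\,u=\omega$ by solving an elliptic Hodge system. Standard Schauder estimates up to the boundary give
\[
\|u\|_{C^{1,\alpha}(\bar\Omega)}\leq C\bigl(\|\omega\|_{C^{0,\alpha}(\bar\Omega)}+\|u\|_{L^2(\Omega)}\bigr),
\]
the second term accounting for the finite-dimensional harmonic component when $\Omega$ is not simply connected.

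For the iteration, given $u^n\in C^{1,\alpha}$ tangent to $\partial\Omega$, I let $\Phi^n$ denote its Lagrangian flow and define $\omega^{n+1}$ as the Cauchy pushforward of $\omega_0$ along $\Phi^n$ (so $\omega^{n+1}=\omega_0\circ(\Phi^n)^{-1}$ when $d=2$, and $\omega^{n+1}\circ\Phi^n=(\nabla\Phi^n)\,\omega_0$ when $d=3$), then set $u^{n+1}$ to be the Biot--Savart image of $\omega^{n+1}$. Starting from $u^0\equiv u_0$, the aim is a uniform bound on a common interval $[0,T_*]$. The central estimate is the H\"older transport-and-stretch inequality
\[
\|\omega^{n+1}(t)\|_{C^{0,\alpha}}\leq \|\omega_0\|_{C^{0,\alpha}}+C\int_0^t\|\nabla u^n\|_{C^{0,\alpha}}\,\|\omega^{n+1}\|_{C^{0,\alpha}}\,ds,
\]
which, together with $\|\nabla u^{n+1}\|_{C^{0,\alpha}}\lesssim\|\omega^{n+1}\|_{C^{0,\alpha}}+\text{l.o.t.}$ from the Schauder step, closes into a scalar differential inequality of the form $y'\leq Cy^2$ for $y(t)=\sup_n\|\omega^n(t)\|_{C^{0,\alpha}}$. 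This has a positive existence time $T_*=T_*(\|\omega_0\|_{C^{0,\alpha}})$ and delivers uniform $C^{1,\alpha}$ bounds on $\{u^n\}$ on $[0,T_*]$.

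Convergence then follows from a Gronwall estimate for differences in a weaker norm: subtracting successive iterates and using the uniform $C^{1,\alpha}$ bound to control commutators, $\{u^n\}$ is Cauchy in $C^0(\bar\Omega\times[0,T_*])$, and hence by interpolation against the uniform bound in $C^{1,\alpha'}$ for every $\alpha'<\alpha$. The limit $u$ satisfies the Euler equation pointwise, and membership in $C^{1,\alpha}$ follows from lower semicontinuity of the H\"older seminorm together with the uniform bound. Uniqueness is proved by the same difference argument applied directly to any two $C^{1,\alpha}$ solutions sharing the initial datum.

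The main technical obstacle will be the boundary analysis: Schauder estimates for the Biot--Savart Hodge system must be carried out up to the curved $\partial\Omega$ with the correct boundary conditions encoding tangency, and one must verify that tangency is preserved throughout the iteration. This last point is automatic, since the flow of a vector field tangent to $\partial\Omega$ preserves $\partial\Omega$ setwise, so the Cauchy pushforward is a valid source for the next Biot--Savart inversion in the same class. The deeper reason for working in $C^{1,\alpha}$ rather than $C^1$ is that the singular integral inverting curl gains one derivative in H\"older scales but not in $L^\infty$-based integer scales; this is precisely the failure behind the ill-posedness results \cite{EM,BL} cited just above the theorem.
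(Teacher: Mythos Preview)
The paper does not prove this theorem; it is simply recorded as a classical fact with citations to Lichtenstein \cite{Lichtenstein} and Gunther \cite{Gunther}, so there is nothing to compare your argument against. Your outline is in fact the classical Lagrangian Picard scheme going back to those very papers (and, in modern dress, to the treatment in Majda--Bertozzi), and it is essentially correct.

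A few small points you would need to tighten before this becomes an actual proof. First, the statement is for arbitrary $d$, but your Cauchy formula is written only for $d=2,3$; in general the vorticity is a $2$-form and is transported by Lie pullback, which you should state once and then specialize. Second, in $d\geq 3$ the Biot--Savart inversion requires the source to be divergence-free; you should remark that this is preserved by your iteration because the Lie pushforward of a closed form by a volume-preserving flow is closed. Third, when $\Omega$ is not simply connected you mention the harmonic component but do not say how it is updated in the iteration; the clean way is to fix it by prescribing the circulations, which are conserved by Kelvin's theorem for the true solution and should be frozen at their initial values throughout the scheme. None of these is a genuine obstacle.
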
 It was proven shortly thereafter \cite{Holder,Wolibner} that such solutions are global when $d=2.$
\begin{theorem}\label{global}
When $d=2$, the $C^{1,\alpha}$ solutions of the preceding theorem can be extended for all time. 
\end{theorem}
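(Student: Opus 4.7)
My plan is to promote the local existence of Theorem \ref{Classical} to a global result via the standard continuation principle: the $C^{1,\alpha}$ solution can be continued so long as $\|u(t)\|_{C^{1,\alpha}}$ stays bounded on $[0,T_*)$, so it suffices to exhibit an a priori bound of the form $\|u(t)\|_{C^{1,\alpha}}\leq M(t,u_0)<\infty$ on every finite interval. The two-dimensional structure enters through the scalar vorticity $\omega := \partial_1 u_2-\partial_2 u_1$, which satisfies the pure transport equation
\[
\partial_t\omega + u\cdot\nabla\omega=0,\qquad \omega(x,t)=\omega_0(\Phi_t^{-1}(x)),
\]
where $\Phi_t$ is the Lagrangian flow of $u$. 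In particular $\|\omega(t)\|_{L^p}=\|\omega_0\|_{L^p}$ for every $p\in[1,\infty]$, and energy conservation yields $\|u(t)\|_{L^2}=\|u_0\|_{L^2}$. These two a priori bounds are the foundation of the rest of the argument.

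Next, $\nabla u$ is recovered from $\omega$ by a Biot--Savart operator of Calder\'on--Zygmund type (adapted to the boundary of $\Omega$). This operator maps $C^\alpha$ to $C^\alpha$ boundedly and obeys the classical logarithmic interpolation
\[
\|\nabla u(t)\|_{L^\infty} \leq C\bigl(\|\omega_0\|_{L^\infty}+\|u_0\|_{L^2}\bigr)\Bigl(1+\log^+\tfrac{\|\omega(t)\|_{C^\alpha}}{\|\omega_0\|_{L^\infty}}\Bigr).
\]
On the other hand, differentiating the flow ODE gives $\|\nabla\Phi_t^{\pm 1}\|_{L^\infty}\leq\exp\bigl(\int_0^t\|\nabla u(s)\|_{L^\infty}\,ds\bigr)$, and combining this with $\omega(t)=\omega_0\circ\Phi_t^{-1}$ and the chain rule yields
\[
\|\omega(t)\|_{C^\alpha} \leq \|\omega_0\|_{C^\alpha}\,\|\nabla\Phi_t^{-1}\|_{L^\infty}^\alpha.
\]

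Setting $A(t):=\int_0^t\|\nabla u(s)\|_{L^\infty}\,ds$ and chaining these three displays produces a linear differential inequality $A'(t)\leq C_1+C_2\,\alpha\,A(t)$, whose solution is finite on every finite interval. Hence $\|\omega(t)\|_{C^\alpha}$ grows at most double-exponentially in $t$, the Calder\'on--Zygmund bound then controls $\|\nabla u(t)\|_{C^\alpha}$, and $\|u(t)\|_{L^\infty}$ is bounded by $\|\omega\|_{L^1\cap L^\infty}$ and $\|u\|_{L^2}$. Thus $\|u(t)\|_{C^{1,\alpha}}\leq M(t,u_0)<\infty$ for every finite $t$, and the continuation criterion forces $T_*=\infty$.

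The principal technical obstacle is carrying out the Calder\'on--Zygmund and logarithmic interpolation estimates cleanly in a smooth bounded domain with tangential boundary conditions: one must handle the harmonic correction associated with the boundary and verify that $\Phi_t(\bar\Omega)=\bar\Omega$, so that $\Phi_t^{-1}$ is well-defined on $\bar\Omega$ and the transport representation of $\omega$ makes sense. The latter follows from the tangency of $u$ to $\partial\Omega$, which is propagated by the equation; the former is classical but delicate in general geometries. On $\T^2$ or $\R^2$ these issues disappear and the argument above runs essentially verbatim.
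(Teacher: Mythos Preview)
The paper does not actually supply a proof of Theorem~\ref{global}; it merely records the statement and attributes it to H\"older \cite{Holder} and Wolibner \cite{Wolibner}. Your sketch is precisely the classical Wolibner argument (in its modern Beale--Kato--Majda formulation): conservation of $\|\omega\|_{L^\infty}$ via the 2d transport structure, the logarithmic borderline estimate for $\|\nabla u\|_{L^\infty}$ in terms of $\|\omega\|_{L^\infty}$ and $\log\|\omega\|_{C^\alpha}$, the flow-map bound on $\|\omega(t)\|_{C^\alpha}$, and the resulting Gronwall inequality yielding at most double-exponential growth of the H\"older norm. This is correct and is exactly what the cited references do, so there is nothing to contrast.
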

\begin{remark}
As far as the author knows, outside perhaps the case $d=2,$ it is not known what is the largest space embedded in $C^1$ where one can give a good local theory for solutions. Here, we are asking specifically what the correct condition on the modulus of continuity of $\nabla u$ or $\omega$ is. When $d=2,$ this problem has been studied in \cite{Koch,KSK} where the Dini condition naturally appears. This does not appear to be the right condition when $d\geq 3.$ Outside of this, there is quite a bit known about the solvability of the Euler equation in a wide variety of spaces.
\end{remark}
A notoriously difficult open problem is to determine whether solutions can be extended globally when $d\geq 3.$ In many cases, this problem is still open. However, as far as the classical solutions of Theorem \ref{Classical} go, this is resolved by
\begin{theorem}\label{SingularityClassical}
There exists a classical solution $u\in C^{1,\alpha}(\mathbb{R}^3\times[0,1))$ with $\alpha>0$ so that 
\[\lim_{t\rightarrow 1^-}|\nabla u(t)|_{L^\infty}=+\infty.\] In particular, $u$ cannot be extended to be a classical solution on $\mathbb{R}^3\times[0,1]$.\end{theorem}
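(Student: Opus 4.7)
The plan is to construct a self-similar blow-up solution in the axisymmetric no-swirl class and then prove it is nonlinearly stable in a weighted $C^{1,\alpha}$-type norm. First I would reduce to axisymmetric flow without swirl, so $\omega = \omega^\theta(r,z,t)\,e_\theta$ and the scalar $\Omega := \omega^\theta / r$ is Lagrangian-invariant:
\[\partial_t \Omega + u_r \partial_r \Omega + u_z \partial_z \Omega = 0,\]
with $u$ recovered from $\omega^\theta = r\Omega$ via an axisymmetric Biot--Savart law. Since $\Omega$ is merely transported and therefore bounded for all time, any singularity of $\nabla u$ has to be driven entirely by the \emph{geometry} of this elliptic inversion rather than by pointwise vortex stretching.

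I would then propose a self-similar ansatz of the form
\[\Omega(r,z,t) \;=\; \frac{1}{1-t}\,F\!\left(\frac{(r,z)}{(1-t)^{1/\alpha}}\right),\]
so that \emph{any} sufficiently regular profile $F$ yields a classical solution on $[0,1)$ with $\|\nabla u(t)\|_{L^\infty}\sim (1-t)^{-1}$. Working in polar coordinates $(R,\theta)$ on the $(r,z)$-half-plane, the key analytical input is that the axisymmetric Biot--Savart operator, when applied to profiles of this self-similar type, decomposes (in the limit $\alpha\to 0$) as $\tfrac{1}{\alpha}$ times a purely \emph{local} angular operator plus an $O(1)$ remainder. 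The $1/\alpha$ factor exactly balances the rescaling term $\tfrac{1}{\alpha}(\xi\cdot\nabla)F$ produced by the self-similar ansatz, revealing a solvable ``fundamental model'' at leading order whose stationary profiles can be written essentially in closed form. This yields a smooth, compactly supported angular profile $F_*$ of the model equation.

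Given $F_*$, I would perturb off it to get an actual self-similar profile $F_\alpha$ of the full axisymmetric Biot--Savart equation for all sufficiently small $\alpha>0$, using an implicit-function-theorem continuation argument based on the invertibility of the model linearization at $F_*$ in a weighted Hölder space. With $F_\alpha$ in hand, nonlinear stability is proved by linearizing the equation in self-similar variables around $F_\alpha$, introducing modulation parameters to absorb the neutral modes generated by the scaling and translation symmetries, and establishing a coercivity (spectral-gap) estimate for the reduced linear operator $L_\alpha$. A bootstrap argument then traps nearby initial data inside a shrinking tube around the modulating family, forcing finite-time blow-up; a final time rescaling normalizes the blow-up time to $1$, and the local well-posedness of Theorem \ref{Classical} takes care of continuation up to the singularity.

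The hard part is the coercivity estimate for $L_\alpha$. The operator is strongly non-self-adjoint and naturally lives on a weighted Hölder space rather than a Hilbert space, so no off-the-shelf spectral theorem applies; one has to build a bespoke positive-definite functional---essentially a monotone energy/virial quantity tuned to $F_\alpha$---and prove its monotonicity along the linearized flow. A closely related difficulty is that the transport nonlinearity $u\cdot\nabla\omega$ is only Hölder in the natural norm, so the weighted $C^{1,\alpha}$ space must be chosen so that the quadratic remainder is subcritical with respect to the linear decay rate. A last, mostly technical point is to verify that the profile $F_\alpha$ vanishes with the correct rate at the symmetry axis and at $\theta = \pi/2$, so that the resulting velocity is genuinely $C^{1,\alpha}(\mathbb{R}^3)$ and not merely axisymmetric-regular; this is arranged at the level of $F_*$ by choosing its angular dependence appropriately.
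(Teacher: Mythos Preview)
Your overall strategy---axisymmetric no-swirl, a self-similar ansatz, small $\alpha$ as a perturbation parameter producing an explicitly solvable ``fundamental model,'' then an implicit-function/linearized-coercivity argument---is indeed the route of \cite{E_Classical} and is what the paper sketches in its Hypotheses~1--2 framework. But there is an internal contradiction in your setup, and it sits exactly where the mechanism lives.

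You define $\Omega:=\omega^\theta/r$, correctly note it is purely transported, and conclude it is bounded for all time. You then impose $\Omega(r,z,t)=(1-t)^{-1}F\big((r,z)/(1-t)^{1/\alpha}\big)$ with $F$ smooth and compactly supported. These are incompatible: the ansatz forces $\|\Omega(t)\|_{L^\infty}=(1-t)^{-1}\|F\|_{L^\infty}\to\infty$, while transport conserves $\|\Omega(t)\|_{L^\infty}$. This is not a cosmetic slip. If $\omega^\theta/r$ really were bounded (as it would be for any smooth axisymmetric no-swirl data), the Ukhovskii--Yudovich argument yields \emph{global} regularity and there is no singularity at all. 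So the sentence ``since $\Omega$ is merely transported and therefore bounded for all time, any singularity of $\nabla u$ has to be driven entirely by the geometry of the elliptic inversion'' is precisely backwards: under that hypothesis the elliptic inversion cannot produce a singularity.

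The fix is that the self-similar ansatz belongs on $\omega^\theta$ (equivalently on the full vorticity, as in the paper's \eqref{SS1}--\eqref{SS2}), which obeys the genuine stretching equation $D_t\omega^\theta=(u_r/r)\,\omega^\theta$. The role of $C^{1,\alpha}$ with small $\alpha$ is then not merely a device to close a fixed point: it is what permits $\omega^\theta$ to vanish on the axis only like $|x|^\alpha$ (the paper's phrase is ``vanishes only very weakly on the symmetry axis''), so that $\omega^\theta/r$ is \emph{unbounded} from the outset and the Ukhovskii--Yudovich obstruction disappears. With the ansatz placed on the right quantity, your description of the $1/\alpha$ expansion of Biot--Savart, the explicit leading profile, and the coercivity of the linearization matches the paper's Hypotheses~1 and~2. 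One further remark: the modulation/nonlinear-stability layer you outline is more than the bare existence statement requires---an exact self-similar profile already furnishes the solution in the theorem---though it is of course what \cite{EGM} adds on top.
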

\begin{remark}
In the above theorem, $\alpha>0$ is small and is used as a parameter to close a certain nonlinear fixed point argument, as we shall discuss later. In the symmetry class considered in the first proof \cite{E_Classical} of Theorem \ref{SingularityClassical}, the smallness of $\alpha$ was necessary. Later constructions \cite{ChenHou1,EP} were done where this restriction may not be necessary.
\end{remark}

\section{Singularity Formation}
As we have mentioned in the preceding section, singularities do form for classical solutions to the Euler equation. The purpose of this section is to discuss the mechanisms for singularity formation and how it is proven to occur in some cases.  

\subsection{Necessary conditions}
We should first discuss what has to happen for solutions to become singular. There are two conceptually important results, both of which are inspired by the global regularity in 2d and the dynamics of vorticity. Before we mention them, let us thus recall the vorticity equation:
\begin{equation}\label{VorticityTransport}\partial_t\omega+[u,\omega]=0,\end{equation}
\begin{equation}\label{BSLaw}u=\nabla\times (-\Delta)^{-1}\omega.\end{equation}
The first necessary condition for a singularity is the celebrated Beale-Kato-Majda criterion, which is what distinguishes incompressible singularities:
\begin{theorem}\label{BKM}
If the solution from Theorem \ref{Classical} cannot be continued past some time $T_*<\infty,$ then the vorticity must become unbounded as $t\rightarrow T_*.$ In fact, it must be that
\[\lim_{t\rightarrow T_*^-}\int_{0}^t|\omega|_{L^\infty}=+\infty.\] 
\end{theorem}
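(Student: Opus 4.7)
The plan is to prove the contrapositive: assuming $\int_0^{T_*}\|\omega(s)\|_{L^\infty}\,ds < \infty$, I will show that $\|u(t)\|_{C^{1,\alpha}}$ remains bounded on $[0,T_*)$. Once this uniform bound is in hand, for any $t$ sufficiently close to $T_*$ one may apply Theorem~\ref{Classical} with $u(\cdot,t)$ as initial data and obtain existence on an interval of length bounded below by a function of $\|u(t)\|_{C^{1,\alpha}}$ alone, thereby extending the solution past $T_*$ and contradicting the maximality of $T_*$.

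Two ingredients feed the estimate. First, the transport structure of vorticity: in three dimensions, $\partial_t\omega + u\cdot\nabla\omega = \omega\cdot\nabla u$ admits the Cauchy representation $\omega(\Phi_t(x),t) = \nabla\Phi_t(x)\,\omega_0(x)$ along the Lagrangian flow, and since $\partial_t\nabla\Phi_t = (\nabla u)(\Phi_t)\nabla\Phi_t$, one has $\|\nabla\Phi_t\|_{L^\infty}\leq \exp\bigl(\int_0^t\|\nabla u\|_{L^\infty}\,ds\bigr)$. A parallel analysis at the level of H\"older differences gives
\[\|\omega(t)\|_{C^\alpha}\leq C(1+\|\omega_0\|_{C^\alpha})\exp\!\left(C\int_0^t \|\nabla u(s)\|_{L^\infty}\,ds\right).\]
Second, the logarithmic Biot--Savart interpolation, which is the analytic heart of BKM:
\[\|\nabla u\|_{L^\infty}\leq C\Bigl(1+\|u\|_{L^2}+\|\omega\|_{L^\infty}\bigl(1+\log(e+\|\omega\|_{C^\alpha})\bigr)\Bigr),\]
derived from the Biot--Savart law by decomposing the singular Calder\'on--Zygmund kernel representing $\nabla u$ in terms of $\omega$ at a scale that balances $\|\omega\|_{L^\infty}$ against $\|\omega\|_{C^\alpha}$. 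The $L^2$ norm is conserved by the flow and may be absorbed into the constant.

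Combining these, set $M(t):=\log(e+\|\omega(t)\|_{C^\alpha})$. Substituting the log interpolation into the exponential bound and taking logarithms produces a differential inequality of the form
\[M(t)\leq M(0) + C\int_0^t \bigl(1+\|\omega(s)\|_{L^\infty}\bigr)M(s)\,ds,\]
and Gr\"onwall yields $M(t)\leq C\exp\bigl(C\int_0^t \|\omega(s)\|_{L^\infty}\,ds\bigr)$, which is uniformly bounded on $[0,T_*)$ by hypothesis. Hence $\|\omega(t)\|_{C^\alpha}$ stays bounded, and the standard estimate $\|\nabla u\|_{C^\alpha}\lesssim \|\omega\|_{C^\alpha}+\|u\|_{L^2}$ upgrades this to a uniform $C^{1,\alpha}$ bound on $u$.

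The main obstacle is the logarithmic interpolation itself: a naive Calder\'on--Zygmund bound only gives $\|\nabla u\|_{L^\infty}\lesssim \|\omega\|_{C^\alpha}$, which substituted into the flow-map exponential yields double-exponential growth for $\|\omega\|_{C^\alpha}$ that cannot be closed with only an integrated $L^\infty$ bound on $\omega$. The log improvement requires exploiting the precise cancellation in the Riesz kernel $\nabla^2(-\Delta)^{-1}$ together with a multi-scale splitting of the convolution, where the cutoff between the near-singularity region (controlled via the H\"older seminorm of $\omega$) and the far region (controlled via $\|\omega\|_{L^\infty}$) is optimized. A secondary but unavoidable technical matter is the extraction of the H\"older bound on $\omega(t)$ from the Cauchy formula, which requires careful propagation of H\"older difference quotients under the Lagrangian flow.
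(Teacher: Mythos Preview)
The paper does not supply its own proof of this theorem; it is quoted as a classical result, attributed to Beale, Kato, and Majda via the citation \cite{BKM}, and the surrounding discussion only remarks that the result ``is now considered elementary'' before moving on to its applications. There is therefore nothing in the paper to compare your argument against.

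That said, your outline is the standard BKM argument and is correct. The essential ingredients --- the exponential propagation of the $C^\alpha$ norm of $\omega$ in terms of $\int_0^t\|\nabla u\|_{L^\infty}$, the logarithmic endpoint Calder\'on--Zygmund estimate bounding $\|\nabla u\|_{L^\infty}$ by $\|\omega\|_{L^\infty}\log(e+\|\omega\|_{C^\alpha})$, and the Gr\"onwall closure on $M(t)=\log(e+\|\omega(t)\|_{C^\alpha})$ --- are exactly the components of the original proof, and they combine as you describe. One small point: Theorem~\ref{Classical} in the paper is stated on a smooth bounded domain, so the Biot--Savart law and the logarithmic estimate must be the domain versions (with the appropriate Green's function replacing the free-space kernel); this changes nothing structurally but is worth a sentence if you write the proof out in full.
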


While this result is now considered elementary, there are several important conceptual and practical applications of Theorem \ref{BKM}. Since the vorticity is Lie transported by $u$, the first is that it allows us to focus our attention on a simpler question: \begin{question}\label{Question} If a vector field $\omega$ is Lie transported by another vector field $u,$ what qualitative features of $u$ and $\omega_0$ lead to the pointwise growth of $\omega$? Are these qualitative features consistent with \eqref{BSLaw}?\end{question}
\noindent The first question is asked independently of the Euler equation in that we need not assume any relation between $u$ and $\omega.$ It would be very good to understand the first question even in the case where $u$ is time-independent. Understanding some aspect of these questions will be the key to the main results we will discuss. One practical use of Theorem \eqref{BKM} is in numerical simulations. The system \eqref{VorticityTransport}-\eqref{BSLaw} is closed, so the pointwise growth of $\omega$ can be checked numerically much more simply than, say, the growth of $\nabla u.$ Let us remark, finally, that growth mechanisms for solutions to the Euler equation coming directly from the velocity formulation are scarce. 

A second result, established by Constantin, Fefferman and Majda \cite{CFM96} after previous work by Constantin and Fefferman \cite{CF} on the Navier-Stokes system, gives a geometric constraint on the growth of vorticity. 

\begin{theorem}\label{CFM}
If the solution from Theorem \ref{Classical} cannot be continued past some time $T_*<\infty$ and if the velocity field $u$ is uniformly bounded up to $T_*,$ then the direction of the vorticity vector must become irregular as $t\rightarrow T_*.$ In fact, if $\xi=\frac{\omega}{|\omega|}$ it must be that
\[\lim_{t\rightarrow T_*}\int_0^t |\nabla \xi|_{L^\infty}\rightarrow\infty.\]
\end{theorem}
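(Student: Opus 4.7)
The plan is to argue by contrapositive using Beale-Kato-Majda (Theorem \ref{BKM}): I will show that if $\|u\|_{L^\infty}$ remains bounded on $[0,T_*)$ and $\int_0^{T_*}\|\nabla\xi(\cdot,s)\|_{L^\infty}\,ds<\infty$, then $\|\omega(\cdot,t)\|_{L^\infty}$ stays bounded as $t\to T_*^-$, extending the solution past $T_*$ and contradicting the blow-up assumption.

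The first step is to derive a scalar equation for $|\omega|$. Dotting the vorticity equation $(\partial_t+u\cdot\nabla)\omega=(\omega\cdot\nabla)u$ with $\omega/|\omega|$ and writing $\omega=|\omega|\xi$ gives
\[(\partial_t+u\cdot\nabla)|\omega|=\alpha\,|\omega|,\qquad \alpha(x,t):=\xi_i\xi_j\,\partial_j u_i,\]
so along Lagrangian trajectories $|\omega(\Phi(x,t),t)|=|\omega_0(x)|\exp\!\int_0^t\alpha(\Phi(x,s),s)\,ds$. The entire task reduces to a pointwise estimate on the stretching rate $\alpha$ in terms of $\|u\|_{L^\infty}$, $\|\omega\|_{L^\infty}$, and $\|\nabla\xi\|_{L^\infty}$.

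The core of the argument is a singular-integral representation of $\alpha$. Using Biot-Savart, $\nabla u$ can be written as a Calderon-Zygmund transform of $\omega$ plus a constant multiple of $\omega$; contracting with $\xi(x)\otimes\xi(x)$ rewrites
\[\alpha(x)=\text{P.V.}\int_{\R^3} K\!\left(\tfrac{y-x}{|y-x|},\,\xi(x),\,\xi(y)\right)|\omega(y)|\,\frac{dy}{|x-y|^3},\]
where $K$ is a smooth, mean-zero function on $S^2\times S^2\times S^2$. The key geometric identity, which is the heart of the argument, is the algebraic vanishing $K(\hat r,\xi,\xi)\equiv 0$: parallel vortex filaments exert no stretching on one another. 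Splitting the integral at scale $r$, the near-field $|y-x|\le r$ uses $|\xi(y)-\xi(x)|\le\|\nabla\xi\|_{L^\infty}|y-x|$ to trade one power of singularity for a factor of $\|\nabla\xi\|_{L^\infty}$, yielding a contribution $\lesssim\|\nabla\xi\|_{L^\infty}\|\omega\|_{L^\infty} r$; the far-field $|y-x|>r$ is handled by integrating by parts with $\omega=\nabla\times u$, converting $|x-y|^{-3}$ into a bound of order $\|u\|_{L^\infty}/r$. Optimizing in $r$ produces a pointwise estimate of the schematic form
\[|\alpha(x,t)|\,\le\, C\,\Psi\!\left(\|u(\cdot,t)\|_{L^\infty},\,\|\omega(\cdot,t)\|_{L^\infty}\right)\,\|\nabla\xi(\cdot,t)\|_{L^\infty},\]
where $\Psi$ grows at worst logarithmically in its arguments.

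Feeding this back into the exponential representation from the first step and setting $M(t):=\log(e+\|\omega(\cdot,t)\|_{L^\infty})$ yields a Bihari-type inequality $M'(t)\le C\,\Psi(M(t))\,\|\nabla\xi(\cdot,t)\|_{L^\infty}$, whose right-hand side is integrable on $[0,T_*)$ by hypothesis; this forces $M$ bounded and hence $\|\omega\|_{L^\infty}$ bounded up to $T_*$, completing the contradiction via Theorem \ref{BKM}. The main obstacle is the cancellation identity $K(\hat r,\xi,\xi)\equiv 0$ in the singular-integral representation: without it one is left only with the classical bound $|\alpha|\lesssim\|\omega\|_{L^\infty}\log\|\omega\|_{L^\infty}$ that underlies BKM itself, with no room to extract any factor of $\|\nabla\xi\|_{L^\infty}$. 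Exposing this algebraic vanishing requires expanding the Biot-Savart kernel in the correct geometric form; after that, the near/far split is quantitatively delicate but conceptually routine.
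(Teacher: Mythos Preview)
The paper does not prove this theorem; it is quoted from \cite{CFM96}. Your outline is the Constantin--Fefferman--Majda strategy: the scalar equation for $|\omega|$ with stretching factor $\alpha=\xi\cdot\nabla u\cdot\xi$, the singular-integral representation of $\alpha$, the geometric cancellation $K(\hat r,\xi,\xi)\equiv 0$, and the near/far split are precisely the ingredients of their argument, and your near-field and far-field bounds are each correct.

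The gap is in the step where you ``optimize in $r$.'' With the near-field bounded by $C\|\nabla\xi\|_{L^\infty}\|\omega\|_{L^\infty}\,r$ and the far-field by $C\|u\|_{L^\infty}/r$, minimizing over $r$ gives
\[
|\alpha|\ \lesssim\ \bigl(\|u\|_{L^\infty}\,\|\nabla\xi\|_{L^\infty}\,\|\omega\|_{L^\infty}\bigr)^{1/2},
\]
not an estimate of the form $\Psi(\|u\|_{L^\infty},\|\omega\|_{L^\infty})\,\|\nabla\xi\|_{L^\infty}$ with $\Psi$ logarithmic. Plugging this into $\tfrac{d}{dt}\|\omega\|_{L^\infty}\le\|\alpha\|_{L^\infty}\|\omega\|_{L^\infty}$ yields a Riccati-type inequality $\tfrac{d}{dt}\|\omega\|_{L^\infty}\lesssim (\|u\|_{L^\infty}\|\nabla\xi\|_{L^\infty})^{1/2}\|\omega\|_{L^\infty}^{3/2}$, which does \emph{not} close under the hypotheses ``$\|u\|_{L^\infty}$ bounded and $\int_0^{T_*}\|\nabla\xi\|_{L^\infty}<\infty$'': an integrable coefficient in front of a super-linear power can still force blow-up in finite time. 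The actual result in \cite{CFM96} does not assert such a clean bound on $\alpha$; their theorem carries an additional hypothesis on the time-integrability of the local potential $\sup_x\int_{|y|\le\rho}|\omega(x+y,t)|\,|y|^{-2}\,dy$ (paired with $\|\nabla\xi\|_{L^\infty}$), and the formulation here is a streamlined paraphrase. Your sketch captures the mechanism correctly, but the specific claim about what the optimization produces, and hence the Bihari inequality you write down for $M=\log(e+\|\omega\|_{L^\infty})$, does not follow.
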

\noindent This can be seen as a generalization of Theorem \ref{global}. An important use of Theorem \ref{CFM} is for checking the reliability of numerically produced singularity scenarios. One important concrete idea we can gain from this is that a singularity might occur more readily at points where the vorticity vanishes (like at an axis of symmetry). This is because the direction of vorticity may naturally be discontinuous at places where the vorticity vanishes, even if the solution is smooth. Several of the most important numerical simulations, particularly the one done in the landmark work of Luo and Hou \cite{LuoHou}, used these blow-up criteria as ways to check for true singularity formation.

\subsection*{Examples with infinite vorticity growth}
Let us now observe that vorticity growth in 3d is at least as fast as the maximal gradient growth in 2d. In particular, since gradient growth is prevalent in 2d Euler solutions \cite{DEJ,DEReview}, vorticity growth is likely prevalent in 3d. 
\begin{proposition}
Let $\Phi_t(x)$ be the flow map of a smooth 2d Euler solution on some two-dimensional domain $\Omega,$ either a smooth bounded subset of $\mathbb{R}^2$ or $\mathbb{T}^2.$ Let $\lambda(t)=|\nabla\Phi_t|_{L^\infty}.$ Then, there exists a smooth solution to the 3d Euler equation on $\Omega\times\mathbb{T}$ for which:
\[|\omega(t)|_{L^\infty}\geq \lambda(t),\]
for all $t\geq 0.$
\end{proposition}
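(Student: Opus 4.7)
The natural construction is the classical 2.5D (columnar) lift. Given the 2D Euler solution $u^{2D}$ with flow map $\Phi_t$, define on $\Omega\times\T$
\begin{equation}
u^{3D}(x,y,z,t)=\bigl(u^{2D}_1(x,y,t),\,u^{2D}_2(x,y,t),\,w(x,y,t)\bigr),\qquad p^{3D}=p^{2D},
\end{equation}
where $w$ solves $\partial_t w+u^{2D}\cdot\nabla w=0$ with some smooth initial datum $w_0$ to be chosen. A direct check shows this is a smooth 3D Euler solution: the divergence-free condition holds because $w$ is $z$-independent, the first two momentum equations reduce to 2D Euler for $u^{2D}$, and the third reduces to the transport equation for $w$ (the $z$-independence of $p$ makes $\partial_z p$ drop out). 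The vorticity is
\begin{equation}
\omega^{3D}=\nabla\times u^{3D}=\bigl(\partial_y w,\,-\partial_x w,\,\omega^{2D}\bigr),
\end{equation}
so $|\omega^{3D}(t)|_{L^\infty}\ge|\nabla w(t)|_{L^\infty}$, and it suffices to arrange $|\nabla w(t)|_{L^\infty}\ge\lambda(t)$ for all $t\ge 0$.

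To tie $|\nabla w|_{L^\infty}$ to $\lambda(t)$, I would use area preservation. Since $w(x,t)=w_0(\Phi_t^{-1}(x))$, the chain rule gives $\nabla w(\Phi_t(y),t)=(\nabla\Phi_t(y))^{-T}\nabla w_0(y)$, and in two dimensions $\det\nabla\Phi_t\equiv 1$ forces the elementary identity $A^{-T}=J^{\top} A\,J$ for any $A\in\mathrm{SL}(2,\R)$, with $J$ the $90^\circ$ rotation. This yields the key formula
\begin{equation}
\bigl|\nabla w(\Phi_t(y),t)\bigr|=\bigl|\nabla\Phi_t(y)\,\nabla^\perp w_0(y)\bigr|.
\end{equation}
The proposition thus reduces to producing a smooth $w_0$ and a constant $c>0$ with $\sup_{y\in\Omega}|\nabla\Phi_t(y)\nabla^\perp w_0(y)|\ge c\lambda(t)$ for all $t\ge 0$; rescaling $w_0\mapsto w_0/c$ then upgrades this to $|\omega^{3D}(t)|_{L^\infty}\ge\lambda(t)$, as desired.

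For the choice of $w_0$ a natural first attempt is a linear ansatz on a simply connected $\Omega$, $w_0(y)=\alpha y_1+\beta y_2$ with $\nabla^\perp w_0\equiv(-\beta,\alpha)$, or its periodic analog $w_0=\alpha\sin y_1+\beta\sin y_2$ on $\T^2$, with $(\alpha,\beta)$ picked generically. At each $t$, fix $y_t$ and a unit $v_t$ with $|\nabla\Phi_t(y_t)v_t|=\lambda(t)$; provided $\nabla^\perp w_0(y_t)$ is not orthogonal to $v_t$, one has $|\nabla\Phi_t(y_t)\nabla^\perp w_0(y_t)|\gtrsim\lambda(t)$. The main obstacle is a uniform-in-$t$ alignment bound: the direction $v_t$ can rotate with $t$ and could become orthogonal to any fixed $\nabla^\perp w_0$ at isolated times or along bad orbits. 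I would handle this either by tailoring $w_0$ to the given 2D flow, using continuity of the top right singular vector field of $\nabla\Phi_t$ away from its degeneracy locus $\{\sigma_+=\sigma_-\}$, or by a genericity (Baire category) argument showing the set of bad $(\alpha,\beta)$ is meager. Once a good $w_0$ is produced, the final rescaling closes the proof; the verification of the 2.5D lift and of the key identity are short direct computations, and the real content lies in selecting a single $w_0$ whose perpendicular gradient captures the dominant direction of Lagrangian stretching of $\Phi_t$ uniformly in time.
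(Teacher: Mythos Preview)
Your construction is exactly the paper's: the $2\frac{1}{2}$-dimensional lift with a passively transported third component $w$, so that $|\omega^{3D}(t)|_{L^\infty}\ge|\nabla w(t)|_{L^\infty}$. Your $\mathrm{SL}(2,\R)$ identity $|\nabla w(\Phi_t(y),t)|=|\nabla\Phi_t(y)\,\nabla^\perp w_0(y)|$ is the same fact the paper records as $|\nabla\Phi_t|_{L^\infty}=|\nabla\Phi_t^{-1}|_{L^\infty}$, just written on the other side of the flow.

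On selecting a single $w_0$, you are actually more careful than the paper. The paper does not attempt a Baire or singular-vector argument; it simply takes the coordinate functions $g_0^i=x_i$ on a bounded domain (or eight functions that are linear on subdomains of $\T^2$) and writes the sum bound $\sum_i|g_0^i\circ\Phi_t^{-1}|_{C^1}\ge\lambda(t)$. Strictly read, this only shows that at each time \emph{one} of finitely many $2\frac{1}{2}$D lifts has vorticity $\gtrsim\lambda(t)$; the paper is informal on precisely the point you flagged. Your proposed fixes via genericity or alignment with the top singular direction are plausible but not complete as stated: a shear flow already shows that a wrong linear $w_0$ can fail outright (e.g.\ $w_0=x_2$ for $u=(f(x_2),0)$ gives $|\nabla w(t)|_{L^\infty}\equiv 1$), and nothing a priori prevents the maximal-stretching direction of $\nabla\Phi_t$ from sweeping through all of $S^1$ as $t$ varies. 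The honest reading of both your plan and the paper's proof is that one obtains a finite family of 3D solutions among which the bound holds at each time; producing a single $w_0$ valid for all $t\ge 0$ would require an additional argument that neither supplies.
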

\begin{proof}
First observe that $|\nabla\Phi_t|_{L^\infty}=|\nabla(\Phi_t^{-1})|_{L^\infty},$ by incompressibility. Next, there must exist a smooth $g_0$ for which $|\nabla(g_0\circ\Phi_t^{-1})|\geq c\lambda(t).$ Indeed, in the case that $\Omega$ is a bounded subdomain of $\mathbb{R}^2,$ then 
\[\sum_{i=1}^2 |g_0^i\circ\Phi_{t}^{-1}|_{C^1}\geq \lambda(t),\] for $g_0^i=x_i.$ In the case where $\Omega=\mathbb{T}^2,$ we can choose finitely many $g_0^i$ so that $g_0^i$ is linear on subdomains of $\mathbb{T}^2.$  This is easily achievable with eight such $g_0^i.$ Finally, observe that any solution to the 2d Euler equation can be extended to a solution of the 3d Euler equation with a third component that is just transported by the 2d velocity\footnote{These are called $2\frac{1}{2}$ dimensional solutions, though this name is somewhat misleading.}. 
\end{proof}

As a consequence of the landmark result \cite{KS}, we see that the vorticity in 3d can grow double exponentially fast pointwise as $t\rightarrow\infty$ in the periodic cylinder.  If we believe that genuine three-dimensional solutions exhibit even a little more roughness, it should not be surprising that a singularity can form in finite time. Note that while the result of \cite{KS} uses the boundary in a crucial way, it is not difficult to get exponential growth of the vorticity in $\mathbb{T}^3$. Examples of algebraic growth were given in \cite{BardosTiti}, for example.

\subsection{Model equations and non-descriptive arguments}
Before jumping into one approach to singularity formation in the Euler equation, via the analysis of self-similar singularities, let us take a moment to look at model equations and other potential arguments for singularity formation. While such techniques do not generally give us a very detailed and precise description of the singularities, they can reveal deep structures in the equations that tell us \emph{why} singularities form. One of the first models that appeared in the study of singularity formation is the Constantin-Lax-Majda model  \cite{CLM}:
\[\partial_t\omega=\omega H(\omega),\]
\[\partial_x u=H(\omega),\] where $H$ is the Hilbert transform. This was designed as a toy model to understand the effects of vorticity stretching in a non-local model. It actually turns out to be kind of embedded in the Euler equation, in a certain limit. Singularity formation for this model was shown in the original paper \cite{CLM} by finding an exact solution formula for solutions. It turns out that this solution formula shows that singularities form for a large class of data and that the singularities are self-similar. Let us mention also that it was shown in \cite{BE} that \emph{any} scalar equation of the type 
\[\partial_tf=fR(f),\] with $R$ a linear operator satisfying some mild assumptions, exhibits singularity formation. It is an interesting problem to see what happens in the presence of transport terms.  
 One such case is given in the De Gregorio model \cite{DG1, DG2}:
\[\partial_t \omega + u\partial_x\omega=\omega\partial_x u,\]
\[\partial_x u=H(u).\] This can be seen as the Constantin-Lax-Majda model with transport. Singularity formation and global regularity for this model has been shown in a few contexts \cite{EJDG, JSS, Chen}. It is expected that there is global regularity for smooth solutions to this model when posed on $\mathbb{S}^1$. A great deal of evidence for this was given in the work \cite{Chen}. See also \cite{JSS}.

As we move to two dimensions, two important models of the axi-symmetric 3d Euler equation are the Boussinesq and IPM systems. The IPM system is simply:
\[u+\nabla p= -\rho e_2\]
\[\partial_t\rho+u\cdot\nabla\rho=0.\] $\rho$ can be seen as a density. When $\rho$ is independent of $x_1,$ there is no motion $(u\equiv 0).$ When $\rho$ depends on $x_1,$ the fluid moves due to the effect of gravity. Indeed,  when $\partial_1\rho(x_*)>0,$ at some point $x_*$, particles to the right of $x_*$ will locally tend downwards (since they are more heavy). This can be seen mathematically by observing that curl of $u$ is simply $-\partial_1\rho.$ A natural setting for singularity formation in this setting is in an initial configuration with "heavy over light" fluid. See \cite{ZlatosIPM,CMZIPM} for recent outstanding progress on singularity formation in the IPM equation utilizing this idea. Let us mention, about the paper \cite{ZlatosIPM}, that the main mechanism for singularity formation is that solutions to the equation tend to want to stratify. The equation's desire to stratify is so strong that it could potentially occur in finite time.  Applying such an idea to smooth solutions or to the Boussinesq and Euler systems would be quite significant. The setting of \cite{ZlatosIPM} is indeed quite similar to the singularity reported by Luo and Hou \cite{LuoHou}. A similar physical mechanism for singularity formation was used in \cite{EJB, EJ3dE, EJOct} along with scale-invariance to establish singularity formation for strong solutions in domains with corners and on $\mathbb{R}^3.$ Let us also remark finally that there have been various attempts at getting a singularity for the Euler equation and related models by studying models on the Fourier side \cite{KatzPavlovic,Tao,Miller}. Let us also point out that there was also important progress on singularity formation in \cite{CMZZ,CMZ3DE} using what the authors call "vortex-layer cascades." 

\subsection{Graphical representations of various blow-up scenarios}
Below we give graphical descriptions of a few scenarios for singularity formation in the Boussinesq (and IPM) system that have direct implications for the axi-symmetric 3d Euler system and the axi-symmetric Euler equation without swirl. In each scenario, we give a caricature of an initial profile (left) along with its evolution after some time (right).  

\begin{figure}
  \centering\includegraphics[width=0.35\linewidth]{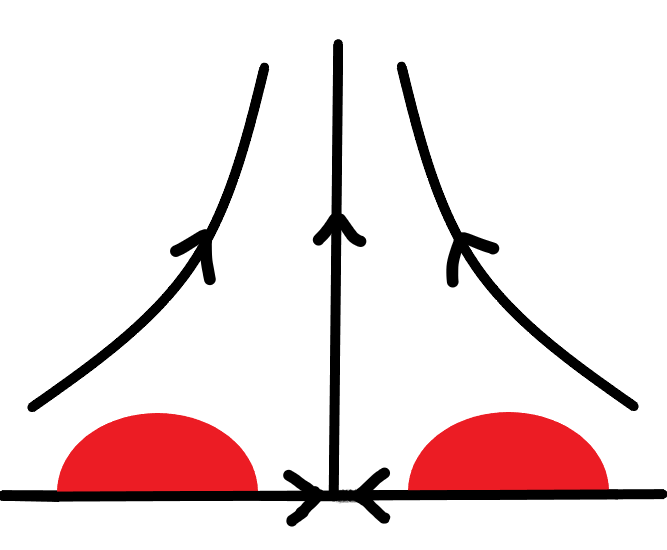} \qquad \includegraphics[width=0.35\linewidth]{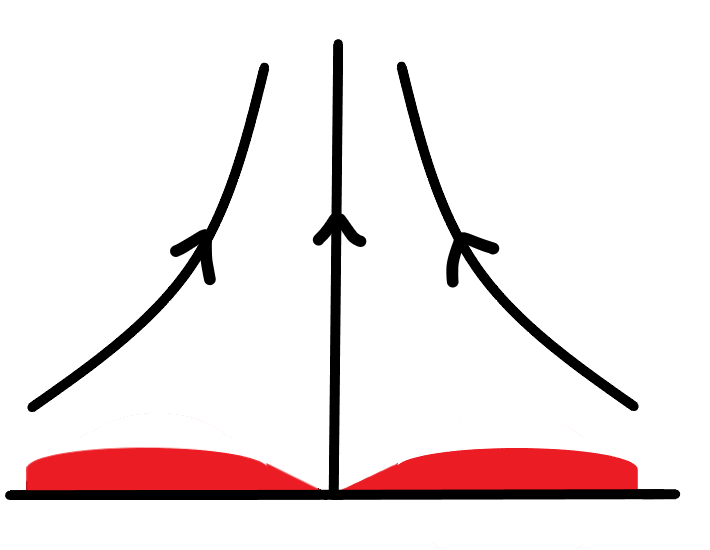}
  \caption{Caricature of the singularity reported by Luo and Hou in \cite{LuoHou}. The horizontal axis, in the picture, is a solid boundary. The red regions should be seen as regions of fluid with high density outside of which is fluid with low density. Since gravity is pointing down, the light fluid in the center is ejected upwards and the high density bumps try to settle. The singularity reported in \cite{LuoHou} predicts that a type of collision occurs at the origin in the picture (as on the right). The intensity of the "collision" at the origin should depend on the degree of vanishing of flatness of the density at $x=0$, as in singularities in the Burgers equation \cite{CGM}. This is also the geometric scenario considered in \cite{ZlatosIPM}, with a corner in \cite{EJB,EJ3dE}, and the numerical work \cite{Wang}. See also \cite{CKY1,CKY2,CHH} for works on models of this scenario. }
  \label{HouLuoCaricature}
\end{figure}

\begin{figure}
\begin{center}
\includegraphics[width=0.3\linewidth]{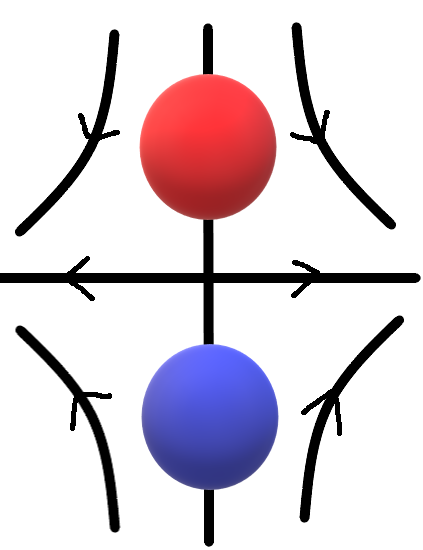}
\qquad 
\includegraphics[width=0.3\linewidth]{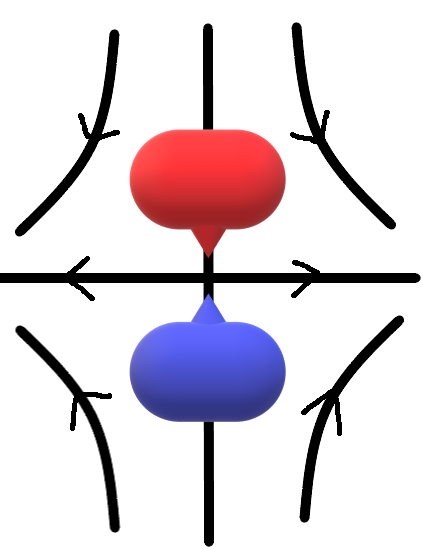}

\end{center}
\caption{Caricature of the singularity constructed in \cite{E_Classical}. A caricature of the data is on the left, while the solution just prior to the singularity is on the right.  Particles flow down and out symmetrically along each axial plane. The initial vorticity is axi-symmetric without swirl and vanishes only very weakly on the symmetry axis. This allows for particles at the axis to come down much faster than those away from the axis. At the final time, the vorticity becomes unbounded at the origin and the velocity develops a cusp discontinuity at $x=0$ while remaining smooth away from the symmetry axis.}
\end{figure}

\begin{figure}
\begin{center}
	\begin{tikzpicture}[domain=-10:10, scale=0.28]
		\draw[->] (-10,0) -- (10,0);
		\draw[->] (0,-10) -- (0,10) ;
		
		\draw[color=red,  dotted, very thick]   plot (\x,{2});
		\draw[color=red, dotted, very thick]   plot (\x,{4});
		\draw[color=red, dotted, very thick]   plot (\x,{6});
		\draw[color=red, dotted, very thick]   plot (\x,{8});
		
		\draw[color=blue, dotted, very thick]   plot (\x,{-2});
		\draw[color=blue, dotted, very thick]   plot (\x,{-4});
		\draw[color=blue, dotted, very thick]   plot (\x,{-6});
		\draw[color=blue, dotted, very thick]   plot (\x,{-8});
		
		\usetikzlibrary {arrows.meta} 
		\draw[color=black, domain=1:10, ->, arrows = {-Stealth[scale=2]}]   plot (\x,{10/\x});
		\draw[color=black, domain=1:10, ->, arrows = {-Stealth[scale=2]}]   plot (\x,{-10/\x});
		\draw[color=black, domain=-1:-10, <-, arrows = {-Stealth[scale=2]}]   plot (\x,{10/\x});
		\draw[color=black, domain=-1:-10, ->, arrows = {-Stealth[scale=2]}]   plot (\x,{-10/\x});
		
		\node[draw, circle, scale=2] at (5,5) {$+$};
		\node[draw, circle, scale=2] at (-5,-5) {$+$};
		\node[draw, circle, scale=2] at (5,-5) {$-$};
		\node[draw, circle, scale=2] at (-5,5) {$-$};

	\end{tikzpicture}
\qquad
	\begin{tikzpicture}[domain=-10:10, scale=0.28]
		\draw[->] (-10,0) -- (10,0);
		\draw[->] (0,-10) -- (0,10) ;
		
		\draw[color=red,  dotted, very thick]   plot (\x,{4 - 3.5*pow(1.4, -pow(0.75*\x, 2))});
			\draw[color=red,  dotted, very thick]   plot (\x,{6 - 3.5*pow(1.4, -pow(0.75*\x, 2))});
			\draw[color=red,  dotted, very thick]   plot (\x,{8 - 3.5*pow(1.4, -pow(0.75*\x, 2))});
			\draw[color=red,  dotted, very thick]   plot (\x,{10 - 3.5*pow(1.4, -pow(0.75*\x, 2))});
		
		\draw[color=blue,  dotted, very thick]   plot (\x,{-4 + 3.5*pow(1.4, -pow(0.75*\x, 2))});
		\draw[color=blue,  dotted, very thick]   plot (\x,{-6 + 3.5*pow(1.4, -pow(0.75*\x, 2))});
		\draw[color=blue,  dotted, very thick]   plot (\x,{-8 + 3.5*pow(1.4, -pow(0.75*\x, 2))});
		\draw[color=blue,  dotted, very thick]   plot (\x,{-10 + 3.5*pow(1.4, -pow(0.75*\x, 2))});
		
		\usetikzlibrary {arrows.meta} 
		
		\node[draw, circle, scale=1] at (7,2) {$+$};
		\node[draw, circle, scale=1] at (-7,-2) {$+$};
		\node[draw, circle, scale=1] at (7,-2) {$-$};
		\node[draw, circle, scale=1] at (-7,2) {$-$};
		
		\node[draw, circle, fill, scale=1/2] at (8,8) {};
		
		\draw[->, very thick] (8, 8) -- (8, 10);
		
		\node[draw, circle, fill, scale=1/2] at (1.8,8) {};
		
		\draw[->, very thick] (1.8, 8) -- (1.8, 6);
		
		\node[draw, circle, scale=1, color=black, very thick] at (5,8.2) {$+$};
	\end{tikzpicture}
\end{center}
\label{UnstableSteadyScenario}
\caption{Caricature of the singularity constructed in \cite{EP}. The doted lines indicate level sets of the density while the circular $+$ and $-$ figures represent vorticity of positive and negative sign, respectively. The left figure represents the initial configuration with unstably stratified density plus a small perturbing vorticity. The right figure represents the short time evolution of the level sets of density by the perturbing vorticity. The change in the shape of the level sets of the density leads to the creation of \emph{more} vorticity, which is represented by the $+$ sign in the upper right corner. The main barrier to singularity formation in this scenario is the flattening of the level sets of the density due to incompressibility due to stretching in the horizontal direction. The singularity constructed in \cite{EP} overcomes this issue in the setting of $C^{1,\alpha}$ solutions that are smooth in the angular direction at $x=0.$ It may very well be that truly smooth solutions develop a singularity in this setting.  }
\end{figure}
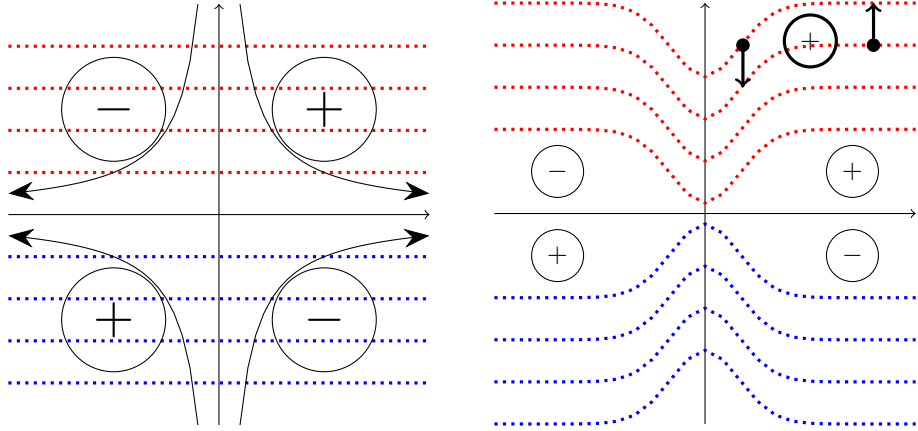

\subsection{Self Similar Singularities}
One of the most serious difficulties with proving singularity formation rigorously is the inherent instability associated with growth. This, coupled with the non-locality of the Euler equation, makes it easy to imagine the many things that could go wrong when trying to actually prove singularity formation. For example, the growth at some point could trigger stronger growth at some nearby points that eventually depletes the original growth mechanism. For this reason, it would be desirable to search for singularities that are as simple and as tame as possible. A natural option is to search for self-similar singularities, which are just given by the rescaling of a single profile:
\[\omega(x,t)=\frac{1}{T-t} \Omega (\frac{x}{(T-t)^{\lambda}}),\] for some $\lambda\in\mathbb{R}$. The problem is then reduced to finding the pair $(\Omega,\lambda)$ which solve a \emph{stationary} problem:
\begin{equation}\label{SS1}\Omega+\lambda x\cdot\nabla \Omega + U\cdot\nabla\Omega=\Omega\cdot\nabla U,\end{equation}
\begin{equation}\label{SS2} U=\nabla\times (-\Delta)^{-1}\Omega.\end{equation}
There is an inherent difficulty in solving \eqref{SS1}-\eqref{SS2} that comes from non-locality. For local problems, such as in the singularity problem for compressible fluids \cite{CGM,MRRS}, \eqref{SS1} could be interpreted as an ODE with $x\cdot\nabla=r\partial_r$ giving us an evolution in $r$, though with a singular point at $r=0$ (and possibly other singular points, depending on the problem). The difficulty here is that \eqref{SS2} means that $U(x)$ is determined by $\Omega$ everywhere. In particular, if we view \eqref{SS1} as an ODE, we would have to see it as an ODE where the future and the past both influence the present in a nontrivial way. This presents a unique difficulty that has not yet been resolved satisfactorily. As a result, the main tool we currently possess to solve \eqref{SS1}-\eqref{SS2} is perturbative. This argument was used to prove Theorem \ref{SingularityClassical} in different contexts in \cite{E_Classical,EGM, ChenHou1,EP,EP2} and is behind the computer assisted proof in \cite{ChenHou2,ChenHou3} in addition to guiding the numerical effort \cite{Wang}. We will now proceed to explain this perturbative argument, which first appeared in this context in \cite{EJDG}. 

\subsection{Perturbative Argument: A Guide}
We will now address the following question.
\begin{question}
Suppose that we have an approximate solution to $(\Omega_*,\lambda_*)$ to \eqref{SS1}-\eqref{SS2} with error $\varepsilon,$ which is qualitatively small. When can we say that we have a true solution to \eqref{SS1}-\eqref{SS2}?
\end{question}
Clearly, the main question relates to the invertibility of the linear operator $\mathcal{L}_*$ defined by
\[\mathcal{L}_*(\Omega)=\Omega+\lambda_*x\cdot\nabla\Omega +U_*\cdot\nabla\Omega+\nabla\Omega_*\nabla\times(-\Delta)^{-1}\Omega-\nabla U_*\Omega-\Big(\nabla\nabla\times(-\Delta)^{-1}\Omega\Big)\, \Omega_*\]
\[+\lambda x\cdot\nabla\Omega_*.\]Notice that $\lambda$ is a \emph{free} parameter that can be used to improve the invertibility properties of $\mathcal{L}_{*}(\Omega).$ In particular, we should view the $\lambda$ term as a rank-one term that we are free to choose. We have written it on its own line to emphasize this point. 

We are now going to "answer" the question positively by imposing two informal hypotheses. We want to clarify that these hypotheses are merely written this way for pedagogical purposes. For this reason, we will completely do away with a very important technical part: the construction of the relevant spaces, which the author views as a secondary issue to the main conceptual points that the hypotheses should convey. 

The first hypothesis relates to the invertibility of $\mathcal{L}_*$ and it was the main idea of the paper \cite{EJDG}.
\begin{hypothesis}
Assume that there exists a choice of the functional $\Omega\rightarrow\lambda$ for which $\mathcal{L}_*$ is an \emph{invertible} operator.  
\end{hypothesis}
It is not difficult to check that, if we have a true solution $\Omega_*$, scaling invariance gives us that $x\cdot\nabla\Omega_*$ is automatically in the kernel of $\mathcal{L}_*$ with $\lambda=0$. Choosing $\lambda$ properly should do away with this issue.  \noindent Next, we state the second main hypothesis.
\begin{hypothesis}
$\lambda_*>0$ and the vector field $\tilde U_*:=U_*+\lambda_*x$ is {\bf outgoing}. One way to quantify this is by \[\tilde U_*(x)\cdot x\geq c |x|^2,\] for all $x,$ and some $c>0.$  
\end{hypothesis}
\noindent What this means heuristically is that information is being pushed out of zero and into the bulk. In particular, if we are considering functions that vanish very strongly near zero, they are being damped very strongly by the joint effect of $\lambda x\cdot\nabla+U_*\cdot\nabla.$
What this means technically is that the main part of the linear operator, namely the transport part, is coercive on weighted spaces. Note that the second Hypothesis is usually quite easy to check. The first is the one that may be difficult to carry out analytically \cite{EP2}.

We now state a "meta theorem," which of course requires extra case-dependent hypotheses to be useful and true. 

\vspace{2mm}

\noindent {\bf Meta Theorem.} \emph{
Assume the two hypotheses hold for some $(\Omega_*,\lambda_*)$ and that the $\varepsilon$ is small enough. Then, there exists a true solution to \eqref{SS1}-\eqref{SS2}.}

\vspace{2mm} 

\begin{remark}
Small enough, here, depends of course on the function space, the size of $\Omega_*$ and $\lambda_*$, the norm of $\mathcal{L}_*^{-1},$ and a lower bound on $c$ in the second Hypothesis. We also note that, a brief look at \eqref{SS1}-\eqref{SS2} reveals that a "good" approximate solution must satisfy $\Omega_*\sim |x|^{-1/\lambda_*}$ when $|x|\rightarrow\infty,$ which we tacitly assume to be the case. 
\end{remark}

The use of the hypothesis on the invertibility of $\mathcal{L}_*$ is obvious, but we should explain where the second hypothesis comes from. Indeed, when solving a nonlinear problem of the form:
\[\mathcal{L}_*(\Omega)=\varepsilon +N(\Omega),\] we will have a unique solution if $N$ is a bounded (possibly nonlinear) operator so long as $\mathcal{L}_*^{-1}$ is bounded and $\varepsilon$ is small enough. In our case, of course, $N$ is not a bounded operator (it contains the term $U\cdot\nabla\Omega$). Therefore, we cannot apply Picard iteration to get existence. However, if we can rewrite:
\[\mathcal{L}_*=\mathcal{L}_{**}+K,\] with $K$ a smoothing or even finite-rank operator and $\mathcal{L}_{**}$ \emph{coercive}, it turns out that we can get back an inverse function theorem. The idea of doing such a decomposition seems to have appeared in many previous works on self-similar singularities. An example of the use of this idea to prove an inverse function theorem is given in Section 7 of \cite{EP}, inspired by the work \cite{MRRS}. 

Let us now state a very simple fact in 1d that illustrates the use of the second hypothesis in getting such a decomposition of $\mathcal{L}_*.$ 
\begin{lemma}
Fix $u:[0,1]\rightarrow \mathbb{R}$ with $u(0)=u(1)=0$ and $g:[0,1]\rightarrow\mathbb{R}$ are smooth.
Consider the operator $\mathcal{L}_*$ defined by:
\[\mathcal{L}_*(f)=u(x)\partial_xf+g(x) f(x).\] Assume that $u'(0)>0$, $u>0$ on $[0,1],$ and that $g(1)>0.$ Then, the operator $\mathcal{L}_*$ can be written as the sum of a coercive part and a finite rank part on a suitably chosen weighted Sobolev space. 
\end{lemma}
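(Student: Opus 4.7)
The plan is to straighten the transport $u\,\partial_x$ into pure translation by the diffeomorphism
\[\tau(x):=\int_{1/2}^x \frac{dy}{u(y)},\]
which maps $(0,1)$ smoothly onto $\mathbb{R}$: the hypothesis $u'(0)>0$ forces $u(y)\sim u'(0)y$ near $0$, making the integral logarithmically divergent and sending $\tau\to -\infty$ as $x\to 0^+$, while the zero of $u$ at $x=1$ analogously sends $\tau\to +\infty$. Setting $F(\tau):=f(x(\tau))$ and using $u\,f_x=F_\tau$, the operator $\mathcal{L}_*$ is conjugate to the pure first-order operator $\widetilde{\mathcal{L}}_*F:=F_\tau+\widetilde g(\tau)F$, where $\widetilde g(\tau):=g(x(\tau))\in C_b^\infty(\mathbb{R})$ with $\widetilde g(-\infty)=g(0)$ and $\widetilde g(+\infty)=g(1)>0$.

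Next, for a smooth positive weight $W$ on $\mathbb{R}$ and $F$ decaying enough at $\pm\infty$ for the boundary terms to vanish, an integration by parts yields the weighted energy identity
\[\langle F,\widetilde{\mathcal{L}}_*F\rangle_{L^2_W}=\int_{\mathbb{R}} \Bigl(\widetilde g(\tau)-\tfrac{W'(\tau)}{2W(\tau)}\Bigr)\,F(\tau)^2\,W(\tau)\,d\tau.\]
I would choose $W$ so that $W'/W$ is a constant strictly less than $2g(0)$ on $\{\tau\le -T\}$ and a constant strictly less than $2g(1)$ on $\{\tau\ge T\}$, smoothly interpolating in between; when $g(0)\le 0$ this simply forces $W$ to grow at $-\infty$, which is permitted. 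Then the pointwise multiplier $A(\tau):=\widetilde g-W'/(2W)$ is bounded below by some $c_0>0$ outside the compact set $[-T,T]$, while on $[-T,T]$ it is merely bounded.

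Finally, correct the middle by a compactly supported potential: pick $\chi\in C_c^\infty(\mathbb{R})$ with $\chi\equiv 1$ on $[-T,T]$, $\mathrm{supp}\,\chi\subset [-T-1,T+1]$, and $M>0$ so large that $A+M\chi\ge c_0$ everywhere, and decompose
\[\widetilde{\mathcal{L}}_*=\widetilde{\mathcal{L}}_{**}+\widetilde K,\qquad \widetilde{\mathcal{L}}_{**}F:=F_\tau+(\widetilde g+M\chi)F,\qquad \widetilde KF:=-M\chi F.\]
The energy identity directly yields $\langle F,\widetilde{\mathcal{L}}_{**}F\rangle_{L^2_W}\ge c_0\|F\|_{L^2_W}^2$, so $\widetilde{\mathcal{L}}_{**}$ is coercive on $H^1_W(\mathbb{R})$. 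The remainder $\widetilde K$ is multiplication by a smooth compactly supported function, so as a map $H^1_W\to L^2_W$ it factors through the Rellich compact embedding $H^1([-T-1,T+1])\hookrightarrow L^2([-T-1,T+1])$ and is therefore compact; since $L^2_W$ is Hilbert it can be approximated in operator norm by a finite-rank operator (spectral truncation of the SVD), with the small residual absorbed back into $\widetilde{\mathcal{L}}_{**}$ without destroying coercivity. Pulling back through $x\leftrightarrow\tau$ gives the decomposition of $\mathcal{L}_*$ on the weighted Sobolev space on $(0,1)$ with weight $w(x)=W(\tau(x))/u(x)$.

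The main obstacle is the choice of weight and the verification of the functional-analytic setup: structurally, $g(1)>0$ is precisely what lets $W'/W$ stay positive at $+\infty$ while $A$ remains positive there, and $u'(0)>0$ is precisely the outgoing condition that sends $x=0$ to $\tau=-\infty$, thereby converting the degeneracy at $x=0$ into freedom to pick $W'/W$ arbitrarily on $\{\tau\ll 0\}$ no matter what sign $g(0)$ has. One must also check that the pullback weighted Sobolev space is a genuine Hilbert space on which the boundary terms in the integration by parts actually vanish—this follows from the growth of $w$ at the endpoints forcing sufficient decay of $f$—and that the finite-rank truncation is done sharply enough to preserve the coercivity margin when its residual is absorbed.
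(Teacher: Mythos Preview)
Your approach is correct and arrives at the same destination by a closely related but distinct route. The paper works directly on $[0,1]$: near $x=0$ it tests against the weight $x^{-N}$ and integrates the transport term by parts, so that the dominant coefficient $\tfrac{N}{2}\tfrac{u(x)}{x}\sim \tfrac{N}{2}u'(0)$ overwhelms $g$ and $u'$ once $N$ is large; on the middle interval $[\delta/2,1-\delta]$ the same computation works because $u$ is bounded below; and on $[1-2\delta,1]$ the positivity of $g(1)$ carries the estimate with no weight needed. You instead conjugate by the time-along-trajectory variable $\tau$, turning $u\partial_x$ into $\partial_\tau$ and converting the weight choice into a choice of $W'/W$. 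Pulled back, your exponential weight $W=e^{a\tau}$ near $\tau=-\infty$ with $a<2g(0)$ is precisely the paper's $x^{-N}$ near $x=0$ with $N=1-a/u'(0)$, so the two are the same integration by parts in different coordinates. What your route buys is a clean global energy identity and a transparent separation of the two mechanisms (outflow at $0$ versus the sign of $g$ at $1$); what the paper's direct computation buys is that it transfers without modification to the multidimensional linearized operators in the actual self-similar Euler problem, where no global straightening of the transport is available.

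One point to tighten: your compactness of $\widetilde K$ is as a map $H^1_W\to L^2_W$, while the coercivity you establish is only in the $L^2_W$ pairing. When you absorb the finite-rank residual $\widetilde K-\widetilde K_n$ back into $\widetilde{\mathcal L}_{**}$, the smallness is in the $H^1_W\to L^2_W$ operator norm, which does not immediately preserve an $L^2_W$-level coercivity bound. The fix is standard---commute $\widetilde{\mathcal L}_{**}$ with $\partial_\tau$, note the commutator is bounded multiplication, and upgrade to an $H^1_W$ coercivity estimate---but it should be said explicitly.
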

\noindent The idea behind the proof of this lemma is the key to many of the coercivity estimates done in the existing papers on self-similar singularities in the Euler equation and related models. The transport part of the operator $\mathcal{L}_*$ pushes information from $x=0$ to $x=1$. Moreover, the second piece of the operator is "positive" near $x=1.$ From a technical point of view, the first step in the proof is to observe that, if we consider $f$ vanishing very strongly at $x=0$ and we compute:
\[\int_0^\delta \mathcal{L}_*(f) f \frac{1}{x^N} dx\geq c \int_0^\delta \frac{f(x)^2}{x^N},\] so long as $N$ is sufficiently large and $\delta$ is sufficiently small. This gives coercivity in a small region around $x=0.$ It is then easy to get coercivity in a similar way on $[\frac{\delta}{2},1-\delta].$ On $[1-2\delta,1],$ we use the positivity of $g.$ We leave the details to the interested reader.
\section*{Acknowledgements}
The author is indebted to all his mentors, collaborators, and friends for sharing their knowledge with him. These begin with M. B. Elgindi, and then P. Constantin, F.H. Lin, N. Masmoudi, and P. Rabinowitz, as well as R. Beekie, R. Bianchini, M. Coti-Zelati, T. Drivas, T. Ghoul, D. Ginsberg, S. Ibrahim, I. Jeong, M. Jo, A. Kiselev, K. Liss, F. Pasqualotto, A. Said, D. Sullivan, V. \v{S}ver\'ak, E. Titi, K. Widmayer, and many more. He particularly thanks T. Drivas and I. Jeong for many years of collaboration and friendship without which many of the results presented would not have been possible. 


\end{document}